\newtheorem{remark}{Remark}
\newtheorem{example}{Example}
\title{Solving separable convex optimization problems: Faster  prediction-correction framework\thanks{This work was supported by the National Natural Science Foundation of China under Grant 12171021 and
the Fundamental Research Funds for the Central Universities.}}
\author{Tao Zhang\footnotemark[2] \and Yong Xia\footnotemark[2] \and Shiru Li\footnotemark[2]}
\begin{document}
\maketitle
\renewcommand{\thefootnote}{\fnsymbol{footnote}}
\footnotetext[2]{School of Mathematical Sciences, Beihang University, Beijing, 100191, P. R. China  ({\tt (T. Zhang) shuxuekuangwu@buaa.edu.cn};  {\tt (Y. Xia) yxia@buaa.edu.cn}; {\tt (S. Li, Corresponding author) lishiru@buaa.edu.cn}).}

\begin{abstract}
He and Yuan's prediction-correction framework [SIAM J. Numer. Anal. 50: 700-709, 2012] is able to provide convergent algorithms for solving separable convex optimization problems at a rate of $O(1/t)$ ($t$ represents iteration times)  in both ergodic (the average of iteration) and pointwise senses.
This paper presents a faster prediction-correction framework at a rate of $O(1/t)$ in the non-ergodic sense (the last iteration) and $O(1/t^2)$ in the pointwise sense. 
Based the faster prediction-correction framework,   we give  three faster algorithms which enjoy $O(1/t)$ in the non-ergodic sense of primal-dual gap and  $O(1/t^2)$  in the pointwise sense. The first algorithm updates dual variable twice when solving two-block separable convex optimization with equality linear constraints. The second algorithm solves multi-block separable convex optimization problems with linear equality   constraints in Gauss-Seidel way. The third algorithm solves  minmax problems with larger step sizes.
\end{abstract}

\begin{keywords}
 Prediction-correction,  Separable convex optimization, Non-ergodic sense,  Pointwise sense.
	\end{keywords}

\begin{AMS}
47H09, 47H10, 90C25, 90C30	
	\end{AMS}

\pagestyle{myheadings}
\thispagestyle{plain}
\markboth{T ZHANG,  Y XIA,  AND S LI}{A FASTER PREDICTION-CORRECTION FRAMEWORK}
\section{Introduction}
He and Yuan  presented a prediction-correction framework \cite{2012On} (or see \cite{2022on}) to analyze the convergence rate of the alternating direction method of multipliers (ADMM) \cite{gabay1976dual,glowinski1975}. Representing ADMM in their prediction-correction framework immediately leads to an $O(1/t)$  ($t$ represents the number of iterations)   convergence rate of the primal-dual gap in the ergodic sense\footnote{In this paper, for given iteration sequence $\{v^t\}$, the ergodic sense represents the average of $\{v^t\}$; the non-ergodic sense represents the last iteration sequence  of $\{v^t\}$; pointwise sense represents $\|v^t-v^{t-1}\|^2$.}
In a series of follow-up works, He et al. \cite{he2014strictly,doi:10.1137/21M1453463,he2016convergence,he2020optimal,2015On} further proved that the prediction-correction framework enjoys an $O(1/t)$ convergence rate in the pointwise sense.

The prediction-correction framework actually provides a unified approach for developing and analyzing algorithms that solve the following  two separable convex optimization problems.
\begin{example}\label{E1}
	The multi-block separable convex
	optimization problem with equality  constraints:
	\begin{equation}\label{P3}\tag{P1}
	\min\limits_{x_i}\left\{f(x)=\sum_{i=1}^{m}f_i(x_i):~ (Ax:=)\sum_{i=1}^{m}A_ix_i=b\right\},
	\end{equation}
	where $m\geq1$, $f_i:\mathbb{R}^{n_i}\rightarrow\mathbb{R}$ is a closed proper convex function,  $A_i\in\mathbb{R}^{l\times n_i}$, $\sum_{i=1}^{m}n_i=n$ and $b\in\mathbb{R}^{l}$. Define the Lagrangian function of \eqref{P3}:
	$$L(x,\lambda):=f(x)-\lambda^T(Ax-b),$$
	where $\lambda\in \mathbb{R}^{l}$ is the corresponding Lagrange multiplier.  
	For convenience, we define
	\begin{equation*}
	\begin{aligned}
	&
	\theta(u):=f(x)=\sum_{i=1}^{m}f_i(x_i),~
	u:=x=\begin{pmatrix}
	x_1\\ \vdots\\x_m
	\end{pmatrix},~
	w:=\begin{pmatrix}
	x_1\\ \vdots\\x_m\\\lambda
	\end{pmatrix},~F(w):=\begin{pmatrix}
	-A_1^T\lambda\\\vdots\\-A_m^T\lambda\\Ax-b
	\end{pmatrix}.
	\end{aligned}
	\end{equation*}
	Then according to the optimality condition,  \eqref{P3} is equivalent to find $w^*=(x_1^*,\dots,x_m^*,\lambda^*)$ such that \begin{equation}\label{P3'}\tag{P1'}
		0\in T(w^*),~{\rm where}~T(w):=\begin{pmatrix}
		\partial f_1(x_1)\\
		\vdots\\
		\partial f_m(x_m)\\
		0
		\end{pmatrix}+\begin{pmatrix}
		-A_1^T\lambda\\\vdots\\-A_m^T\lambda\\Ax-b
		\end{pmatrix}.
	\end{equation}
Noting that, for a given point $\hat{w}:=\begin{pmatrix}
\hat{x}\\\hat \lambda
\end{pmatrix}=\begin{pmatrix}
\hat{u}\\\hat \lambda
\end{pmatrix}$,  it holds that
$$\theta(\hat{u})-\theta(u)	-(w-\hat{w})^TF({w})=L(\hat x,\lambda)-L(x,\hat\lambda),$$
which is exactly the primal dual gap.\end{example}
\begin{example}\label{E2}
	The min-max  problem: \begin{equation}\label{P2}\tag{P2}
	\min\limits_{x} \max\limits_{y}
	\left\{\Phi(x,y):=f(x)-y^TAx-g(y)\right\},
	\end{equation}
	where $f:\mathbb{R}^{n}\rightarrow\mathbb{R}$ and  $g:\mathbb{R}^{m}\rightarrow\mathbb{R}$ are closed proper convex functions and  $A\in\mathbb{R}^{m\times n}$. 
For convenience, we define	
	\begin{equation*}
	\theta(u):=f(x)+g(y),~
	u=w:=\begin{pmatrix}
	x\\y
	\end{pmatrix},~F(w):=\begin{pmatrix}
	-A^Ty\\Ax
	\end{pmatrix}.
	\end{equation*}
		Then according to the optimality condition,  \eqref{P2} is equivalent to find $w^*=(x^*,y^*)$ such that\begin{equation}\label{P2'}\tag{P2'}
		0\in T(w^*),~{\rm where}~T(w):=\begin{pmatrix}
		\partial f(x)\\
		\partial g(y)
		\end{pmatrix}+\begin{pmatrix}
		-A^Ty\\Ax
		\end{pmatrix}.
		\end{equation}
 Noting that, for a given point $\hat{w}:=\hat{u}=\begin{pmatrix}
\hat{x}\\\hat y
\end{pmatrix}$, it holds that, 
$$\theta(\hat{u})-\theta(u)	-(w-\hat{w})^TF({w})=\Phi(\hat x,y)-\Phi(x,\hat y),$$
which is exactly the primal dual gap.
\end{example}

In this paper, we suppose  that the optimal solutions set of \eqref{P3} or \eqref{P2}
are nonempty and bounded.

Quite a few well-known algorithms that follow this framework include the augmented Lagrangian method (ALM) \cite{hestenes1969multiplier,powell1969method}, the proximal ALM \cite{rockafellar1976augmented,rockafellar1976monotone}, ADMM, the linear ADMM \cite{yang2013linearized}, and the strictly contractive Peaceman-Rachford splitting method \cite{he2014strictly,he2016convergence}. These algorithms are designed to solve two-block separable convex optimization problems with equality constraints. For solving multi-block  separable convex optimization problems with equality constraints, the Jacobian ALM \cite{doi:10.1137/130922793,xu2021parallel}, ADMM with substitution \cite{he2012alternating,he2017convergence}, and the ADMM-type algorithm \cite{he2021extensions}, are all based on the prediction-correction framework.
It is interesting to note that the analysis of the divergence of multi-block ADMM \cite{chen2016direct} is also based on the prediction-correction framework.
For solving min-max problems \eqref{P2},
Chambolle-Pock (CP) algorithm and their variants  \cite{chambolle2011first,chambolle2016ergodic,doi:10.1137/21M1453463,he2017algorithmic} could fall into the prediction-correction framework.

On the other hand, there are faster algorithms for solving unconstrained or simple-constrained convex optimization problems. Nesterov \cite{nesterov1983method} is the first to present an accelerated gradient method for unconstrained convex optimization with an $O(1/t^2)$  convergence rate. This method has been extended to composite convex optimization problems that involve the simple proximal operator \cite{beck2009fast,tseng2010} (see \cite{luo2022} for an explanation of second-order differential equations). Attouch \cite{doi:10.1137/20M1333316,2017Convergence} introduced a faster algorithm enjoying $O(1/t^2)$ in the pointwise sense for solving monotone inclusions problems by the continuous dynamical system approaches. 

Note that, by the prediction-correction framework, He and Yuan \cite{2012On} give $O(1/t)$ ergodic convergence rate of the primal-dual gap for the classical  ADMM. As for the non-ergodic convergence rate, only $O(1/\sqrt{t})$ non-ergodic convergence rate of classical ADMM is given in \cite[Section 3.4.5.1]{2020Accelerated}. This is slower  than the ergodic case.
Actually, as shown in \cite{Last2020}, 
Golowich et al. give the  theoretical guarantee that non-ergodic convergence is  slower than the ergodic sense when solving saddle point problems. Actually, the non-ergodic convergence is important in theory and also in practice.

For improving  the convergence rates in the non-ergodic case when solving linear constraints optimization problems only under convex assumption, 
Li and Lin \cite{li2019accelerated} and Tran-Dinh and Zhu \cite{2020NON} and  Valkonen \cite{valkonen2020inertial} present accelerated ADMM with $O(1/t)$ non-ergodic convergence rate when solving \eqref{P3} with $m=2$. The same non-ergodic rate result can also be obtained 
by the continuous dynamical system approaches \cite{2021A}.
By introducing the so called $  
nice~ primal$ $algorithmic$  $map$, Sabach and Teboulle \cite{sabach2022faster} present a class
of Lagrangian-based methods with $O(1/t)$ non-ergodic convergence rates of  both the function values and the feasibility measure when solving \eqref{P3} with $m=1$ or $2$. Note that all the papers \cite{li2019accelerated,2021A,sabach2022faster,2020NON,valkonen2020inertial}  do not give the $O(1/t^2)$  convergence rate in the pointwise sense. We can refer to Table \ref{ta1} for more details.

\begin{table}
	\caption{ Algorithms for solving \eqref{P3}}\label{ta1}
	\begin{center}\footnotesize
		\renewcommand{\arraystretch}{1.3}
		\begin{tabular}{|c|c|c|c|}\hline
			&
			\makecell{
				Problem}& 	
			\makecell{Rate of primal\\dual gap \\ function values or \\feasibility measure }	&\makecell{Rate in\\pointwise sense}\\		\hline
		{	ADMM}&$\eqref{P3}_{2}$&\makecell{$O(1/t)$(ergodic)\\
			$O(1\sqrt{t})$(non-ergodic)
			}&$O(1/t)$\\		\hline
			Li and Lin \cite{li2019accelerated}&$\eqref{P3}_{2}$&$O(1/{t})$(non-ergodic)&-\\	\hline
			\makecell
			{Tran-Dinh\\ and Zhu \cite{2020NON}}&$\eqref{P3}_{2}$&$O(1/{t})$(non-ergodic)&-\\ 	\hline
			Valkonen \cite{valkonen2020inertial}&$\eqref{P3}_{2}$&$O(1/{t})$(non-ergodic)&-\\	\hline
			\makecell{Sabach and \\Teboulle \cite{sabach2022faster}
			}&$\eqref{P3}_{2}$&$O(1/{t})$(non-ergodic)&-\\	\hline
			\makecell{Luo \cite{2021A}}&$\eqref{P3}_{2}$&$O(1/{t})$(non-ergodic)&-\\	\hline
			\makecell{Attouch et al. \cite{doi:10.1137/20M1333316,2017Convergence}}&$\eqref{P3}_{2}$&-&$O(1/t^2)$\\	\hline
			{\bf This paper}&\eqref{P3}&$O(1/{t})$(non-ergodic)&\makecell{$O(1/t^2)$}
			\\	\hline
		\end{tabular}
		\\$\eqref{P3}_{2}$ represents \eqref{P3} with $m=2$.
	\end{center}
\end{table}

Specially,  for the special case of \eqref{P3} with $m=1$ and equality constraints, Luo \cite{2021Accelerated,2021Ap}, He et al. \cite{2021Inertial} and 
Bo\c{t} et al. \cite{2021Fast}  give accelerated ALM with   $O(1/t^2)$ non-ergodic convergence of the function values and the feasibility measure. 
Only Bo\c{t} et al. \cite{2021Fast} give
the  convergence of the iterates under  smooth objective functions.
When solving the special case of \eqref{P3} with $m=2$ and equality constraints, 
 even if ADMM-type algorithms have easily solvable subproblems, the  subproblems of accelerated ALM  given in \cite{2021Inertial,2021Accelerated,2021Ap} may not be easily solvable. Without taking into account of the multi-block structure  is why the convergence rate is faster than $O(1/t)$. The special case of \eqref{P3} with $m=1$ is not the main work of this paper.
 
 Consider the problem \eqref{P2}. Chambolle and Pock \cite{chambolle2011first} give easily solvable subproblems algorithm (CP) with $O(1/t)$ ergodic convergence of primal-dual gap
 under $rs>\rho(A^TA)$\footnote{$\rho(A^TA)$ represents the spectrum of $A^TA$. $1/r$ and $1/s$ actually serve as the stepsizes in iteration algorithm.}.  Actually, CP can be seen as proximal point algorithm (PPA) for solving \eqref{P2}. Hence $O(1/t)$ in pointwise sense of iteration can be obtained. Jiang et al. \cite{2022so} and Li and Yan \cite{2022onrh} extend CP with $O(1/t)$ ergodic convergence under $rs>0.75\rho(A^TA)$ without pointwise convergence rate. Numerically, it indicates that small $rs$  will accelerate convergence when $\rho(A^TA)$ is large. Recently, He et al. \cite{doi:10.1137/21M1453463} give a CP  type algorithm for solving \eqref{P2} with both $O(1/t)$ in ergodic and pointwise sense under $rs>0.75\rho(A^TA)$.  Howere, we point out that the accelerated algorithms given in \cite{doi:10.1137/20M1333316,2017Convergence,li2019accelerated,sabach2022faster,2020NON,valkonen2020inertial} for solving \eqref{P2} is under the assumption $rs>\rho(A^TA)$. For more details, we can refer to Table \ref{ta2}.
\begin{table}
	\caption{Algorithms for solving \eqref{P2}}\label{ta2}
	\begin{center}\footnotesize
		\renewcommand{\arraystretch}{1.3}
		\begin{tabular}{|c|c|c|c|c|}\hline
			&
		Condition&\makecell{Rate of primal\\dual gap or\\ function values,\\feasibility measure }&	
\makecell{Rate in\\pointwise sense}\\		\hline
\makecell{Chambolle \\and Pock \cite{chambolle2011first}}&$rs>\rho(A^TA)$&$O(1/t)$(ergodic)&$O(1/t)$\\	\hline
\makecell
{Jiang et al. \cite{2022so}\\ Li and Yan \cite{2022onrh}}&$rs>0.75\rho(A^TA)$&$O(1/t)$(ergodic)&-\\	\hline
 He et al. \cite{doi:10.1137/21M1453463} &$rs>0.75\rho(A^TA)$&$O(1/t) $(ergodic)&$O(1/t)$\\	\hline
\makecell{Attouch et al.\\ \cite{doi:10.1137/20M1333316,2017Convergence}}&$rs>\rho(A^TA)$&-&$O(1/t^2)$\\	\hline
\cite{li2019accelerated,sabach2022faster,2020NON,valkonen2020inertial}&$rs>\rho(A^TA)$&$O(1/t)$(non-ergodic)&-\\	\hline
	{\bf This paper}&$rs>0.75\rho(A^TA)$&$O(1/t)$(non-ergodic)&\makecell{$O(1/t^2)$}\\	\hline
	
		\end{tabular}
	\end{center}
	
\end{table}

Considering that He and Yuan's prediction-correction framework only enjoys $O(1/t)$ convergence rate in ergodic sense and pointwise sense, and there is no unify framework with non-ergodic convergence rate when  solving \eqref{P3} and \eqref{P2}, it is necessary to establish a framework with faster convergence rate (non-ergodic sense and pointwise sense).

\noindent{\bf Contributions}  We list in the following the contributions of this paper:
\begin{itemize}
	\item[1.] We present a faster prediction-correction framework for solving \eqref{P3} and \eqref{P2}. Different from the algorithms which rely on $  
	nice~ primal$ $algorithmic$  $map$ in \cite{sabach2022faster} or the continuous dynamical system approaches in \cite{doi:10.1137/20M1333316,2017Convergence,2021A}, our ingredient is the prediction-correction framework in \cite{2012On} which enjoys $O(1/t)$ in ergodic sense of the primal-dual gap and $O(1/t)$ in  the pointwise sense. It is worth noting that our framework provides  algorithms at a rate of $O(1/t)$ in non-ergodic sense of primal-dual gap and $O(1/t^2)$   in the pointwise sense.
	\item[2.] He et al. \cite{he2014strictly,he2016convergence} propose a ADMM-type algorithm with dual variable updating twice for better numerical performance when solving \eqref{P3} with $m=2$ and equality constraints (or see Remark \ref{r1}).  Based on  the proposed  faster prediction-correction framework,
	we give a faster ADMM algorithm with dual variable updating twice for solving this problems  in Section \ref{s4.3}. This is the first paper that give algorithm updates dual variable twice with non-ergodic convergence.
	\item[3.]  Based on  the proposed  faster prediction-correction framework,
	we give a faster ADMM-type algorithm in  Gauss-Seidel way for solving \eqref{P3} in Section \ref{s4.1}.
	The existing accelerated algorithms with non-ergodic convergence rates solve  separable convex optimization problems with two-block equality constraints,  for example, \cite{li2019accelerated,2021A,sabach2022faster,2020NON,valkonen2020inertial}. Considering multi-block ADMM is divergent as shown in \cite{chen2016direct}. It seems that expanding  the accelerated algorithms in \cite{li2019accelerated,2021A,sabach2022faster,2020NON,valkonen2020inertial} to multi-block cases in Gauss-Seidel way is not easy. 
	 This is the first paper  to give  a faster  algorithm in Gauss-Seidel way with non-ergodic convergence rate for solving multi-block structure convex optimization problems with equality  constraints.  We can refer to  Table \ref{ta1} for detailed comparison for solving \eqref{P3}.
	\item[4.] Based on  the proposed  faster prediction-correction framework,
	we give a faster CP type algorithm for solving \eqref{P2} in Section \ref{s4.2}.  The faster CP type algorithm enjoys faster convergence such as $O(1/t)$ in non-ergodic sense, and $O(1/t^2)$ the pointwise sense compared to the non-accelerated  algorithms given in  \cite{doi:10.1137/21M1453463,2022so,2022onrh}. The convergence rate is established under the condition $rs>0.75\rho(A^TA)$ compared to the accelerated  algorithms \cite{doi:10.1137/20M1333316,2017Convergence,li2019accelerated,sabach2022faster,2020NON,valkonen2020inertial} with the condition $rs>\rho(A^TA)$. We can refer to  Table \ref{ta2} for detailed comparison for solving \eqref{P2}.
\end{itemize}



\section{He and Yuan's prediction-correction framework}
With $\theta(u)$, $u$, $w$, $F(w)$ and $T(w)$  defined in Example \ref{E1} or \ref{E2},
the fundmental algorithm for solving \eqref{P3'} and \eqref{P2'} (or \eqref{P3} and \eqref{P2}) is the proximal point algorithm (PPA), which was originally introduced by Martinet \cite{martinet1970regularisation}, reads as:
\begin{framed}
	\noindent {\bf PPA.} With a given $w^k$, find ${w}^{k+1}$ such that
	\begin{equation} 
	0\in T(w^{k+1})+Q(w^{k+1}-w^k),
	\end{equation}
	where $Q$ is  a symmetric defined matrix.
\end{framed}

For  general matrix $Q$, the subproblem of PPA may not be easily solved. Hence, an additional algorithm to solve the  subproblem of PPA is necessary.  Consider the separable structure of \eqref{P3'} and \eqref{P2'}, we can solve each block of  \eqref{P3'} or \eqref{P2'} in Jacobi or Gauss-Seidel way in order to reduce computation cost in every iteration (such as ADMM). 

Thus, we consider the following prediction-correction type PPA with special selection of the scaled matrixs.  Noting  that, by special selection of the scaled matrixs,  many famous algorithms fall into the following  prediction-correction framework, including the augmented Lagrangian method (ALM) \cite{hestenes1969multiplier,powell1969method}, the proximal ALM \cite{rockafellar1976augmented,rockafellar1976monotone}, ADMM, the linear ADMM \cite{yang2013linearized}, and the strictly contractive Peaceman-Rachford splitting method \cite{he2014strictly,he2016convergence}, the Jacobian ALM \cite{doi:10.1137/130922793,xu2021parallel}, ADMM with substitution \cite{he2012alternating,he2017convergence} for solving multi-block of \eqref{P3}, and
Chambolle-Pock (CP) algorithm and their variants  \cite{chambolle2011first,chambolle2016ergodic,doi:10.1137/21M1453463,he2017algorithmic}.  
\begin{framed}
	\noindent {\bf[Prediction step.]} With a given $w^k$, find $\widetilde{w}^k$ such that
	\begin{equation} \label{V3}\tag{PS}
	0\in T(\widetilde w^k)+L^TQL(\widetilde w^k-w^k),
	\end{equation}
	where $Q^T +Q\succ 0$ (noting that $Q$ is not necessarily symmetric), $L$ is a matrix.
	
	\noindent {\bf [Correction step.]} Update $v^{k+1}=Lw^{k+1}$ by
	\begin{equation}\label{V4}\tag{CS}
	v^{k+1}=v^k-M(v^k-\widetilde{v}^k), 
	\end{equation}where  $\widetilde v^k=L\widetilde w^k$.
\end{framed}
\begin{remark}
	Three algorithms satisfying \eqref{V3}-\eqref{V4} are given in  Section \ref{s2.2}-\ref{s2.4}. The definitions of $L$, $Q$ and $M$ for different algorithms are also given. 
\end{remark}

Then 
there exists  $\widetilde g^k\in (T-F)(\widetilde w^k)$ such  that 
\begin{equation}
\begin{aligned}
&(w-\widetilde w^k)^T(\widetilde g^k+F(\widetilde w^k)+L^TQL(\widetilde w^k-w^k)=0,~\forall w
\\\Longrightarrow&
\theta(u)-\theta(\widetilde{u}^k)+(w-\widetilde{w}^k)^TF(\widetilde{w}^k)\geq(w-\widetilde{w}^k)^TL^TQL(w^k-\widetilde{w}^k),~\forall w,
\end{aligned}
\end{equation}where the  inequality using convexity of $\theta$. Hence prediction-correction framework \eqref{V3}-\eqref{V4} infers
the following framework (in variational form) due to He and Yuan \cite{2012On}. It is fundamental in providing convergent algorithms for solving \eqref{P3} and \eqref{P2}. We can also refer to \cite{2022on} for a detailed understanding.
\begin{framed}
	\noindent {\bf[Prediction step.]} With a given $w^k$, find $\widetilde{w}^k$ such that
	\begin{equation}\label{G44} \theta(u)-\theta(\widetilde{u}^k)+(w-\widetilde{w}^k)^TF(\widetilde{w}^k)\geq(w-\widetilde{w}^k)^TL^TQL(w^k-\widetilde{w}^k), ~\forall w,
	\end{equation}
	where $Q^T +Q\succ 0$ (noting that $Q$ is not necessarily symmetric) and $L$ is a matrix.
	
	\noindent {\bf [Correction step.]} Update $v^{k+1}=Lw^{k+1}$ by
	\begin{equation}\label{G45}
	v^{k+1}=v^k-M(v^k-\widetilde{v}^k), 
	\end{equation}where  $\widetilde v^k=L\widetilde w^k$.
\end{framed}

To ensure the convergence of algorithms satisfying the above framework, He and Yuan \cite{2022on,2012On} add some assumptions on the selection of the matrices $Q$ and $M$. The following is such a commonly used condition.
\begin{framed}
	\noindent{\bf[Convergence Condition.]}
	For the given matrice $Q$ and nonsingular matrice $M$, setting
	\begin{equation}
	H:=	QM^{-1}\succ 0, 	\label{V5}\tag{CC1}
	\end{equation}
	\begin{equation}
	G:=Q^T +Q-M^THM\succ 0	.\label{V6}\tag{CC2}
	\end{equation}
\end{framed}


The convergence rates of prediction-correction framework \eqref{G44}-\eqref{G45} are established as follows.
\begin{theorem}[$O(1/t)$ ergodic convergence rate]\label{HYL1} {\rm(\cite{2012On,2022on})}\label{T1}
	Let $\{\widetilde{w}^k\}$ be generated by prediction-correction framework  \eqref{G44}-\eqref{G45} under \eqref{V5}-\eqref{V6}. Then we have $t=1,2,\dots,$
	\begin{equation*}
	\theta(\bar{u}^t)-\theta(u) +(\bar{w}^t-w)^TF({w})\leq\frac{1}{2(t+1)}\|v^0-Lw\|^2_{H},~\forall w,
	\end{equation*}
	where $
	\bar{w}^t=\frac{1}{t+1}\sum_{k=0}^{t}\widetilde{w}^k.
	$	
\end{theorem}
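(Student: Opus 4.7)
The plan is to derive a per-iteration telescoping inequality from the prediction step \eqref{G44}, convert the right-hand side into something involving only $v^k$, $v^{k+1}$ and $\widetilde v^k$, and then average using convexity and the skew-symmetry of $F$.

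\textbf{Step 1 (reduce $F(\widetilde w^k)$ to $F(w)$).} In both Examples \ref{E1} and \ref{E2}, $F$ is affine with a skew-symmetric linear part, so $(w-\widetilde w^k)^T\bigl(F(w)-F(\widetilde w^k)\bigr)=0$. Hence \eqref{G44} can be rewritten equivalently as
\begin{equation*}
\theta(u)-\theta(\widetilde u^k)+(w-\widetilde w^k)^TF(w)\geq(w-\widetilde w^k)^TL^TQL(w^k-\widetilde w^k).
\end{equation*}

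\textbf{Step 2 (bring in $H$ and the correction step).} Using \eqref{V5} write $Q=HM$, so the right-hand side becomes $(Lw-L\widetilde w^k)^THM(v^k-\widetilde v^k)$. By \eqref{G45}, $M(v^k-\widetilde v^k)=v^k-v^{k+1}$, and setting $v=Lw$ this is $(v-\widetilde v^k)^TH(v^k-v^{k+1})$.

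\textbf{Step 3 (the key identity).} I would split $v-\widetilde v^k=(v-v^{k+1})+(v^{k+1}-\widetilde v^k)$, and note $v^{k+1}-\widetilde v^k=(I-M)(v^k-\widetilde v^k)$. Applying the elementary identity $2a^THb=\|a\|_H^2+\|b\|_H^2-\|a-b\|_H^2$ to the first summand and expanding the second yields
\begin{equation*}
(v-\widetilde v^k)^TH(v^k-v^{k+1})=\tfrac12\bigl(\|v-v^{k+1}\|_H^2-\|v-v^k\|_H^2\bigr)+\tfrac12(v^k-\widetilde v^k)^T(Q+Q^T-M^THM)(v^k-\widetilde v^k),
\end{equation*}
where I used $\|v^k-v^{k+1}\|_H^2=(v^k-\widetilde v^k)^TM^THM(v^k-\widetilde v^k)$ and symmetrized $Q$. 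By \eqref{V6} the last term equals $\tfrac12\|v^k-\widetilde v^k\|_G^2\geq 0$. This is the main technical obstacle, essentially reproducing the He--Yuan bookkeeping lemma, but it is purely algebraic once the correct splitting is chosen.

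\textbf{Step 4 (telescope and average).} Dropping the nonnegative $G$-term gives
\begin{equation*}
\theta(\widetilde u^k)-\theta(u)+(\widetilde w^k-w)^TF(w)\leq \tfrac12\bigl(\|v-v^k\|_H^2-\|v-v^{k+1}\|_H^2\bigr).
\end{equation*}
Summing this from $k=0$ to $t$ collapses the right-hand side to at most $\tfrac12\|v^0-Lw\|_H^2$. Dividing by $t+1$, applying convexity of $\theta$ to control $\theta(\bar u^t)$, and using linearity of $w\mapsto (\cdot)^TF(w)$ in $w$ to identify the average with $(\bar w^t-w)^TF(w)$, gives the claimed bound. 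The only delicate point throughout is the affine/skew-symmetric trick of Step~1 and the identity of Step~3; everything else is telescoping.
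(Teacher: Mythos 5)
Your proposal is correct, and it is essentially the standard He--Yuan argument that the paper cites for this theorem: your Step 1 is the skew-symmetry identity the paper itself uses in \eqref{G19}, and your Step 3 identity is exactly the bookkeeping computation appearing in \eqref{G3}--\eqref{G2}, followed by the same telescoping and Jensen step. No gaps; nothing genuinely different from the paper's route.
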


\begin{theorem}[$O(1/t)$  in the pointwise sense] {\rm(\cite{he2014strictly,doi:10.1137/21M1453463,he2016convergence,he2020optimal,2015On,2022on})}\label{T2}
	Let $\{\widetilde{w}^k\}$ be generated by prediction-correction framework  \eqref{G44}-\eqref{G45} under \eqref{V5}-\eqref{V6}. Then we have
	\begin{equation*}
	\|M(v^t-\widetilde{v}^t)\|^2_H\leq O(1/t),~t=1,2,\dots.
	\end{equation*}
\end{theorem}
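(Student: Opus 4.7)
The plan is to combine a \emph{summability} bound for $a_k:=\|M(v^k-\widetilde v^k)\|_H^2$ (coming from Fejer monotonicity of $\{v^k\}$ in the $H$-norm) with a \emph{monotonicity} property $a_{k+1}\le a_k$. Once both are in hand, the estimate $(t+1)\,a_t\le \sum_{k=0}^t a_k\le \sum_{k\ge 0} a_k<\infty$ immediately yields $a_t=O(1/t)$.

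For summability I would mimic the first half of the proof of Theorem~\ref{T1}: substituting $w=w^\ast$ into \eqref{G44} and using the VI characterization of $w^\ast$ (together with the fact that $F$ is monotone, so that $\theta(u^\ast)-\theta(\widetilde u^k)+(w^\ast-\widetilde w^k)^T F(\widetilde w^k)\le 0$) gives $(Lw^\ast-\widetilde v^k)^T Q(v^k-\widetilde v^k)\le 0$. Expanding $\|v^{k+1}-Lw^\ast\|_H^2$ via $v^{k+1}=v^k-M(v^k-\widetilde v^k)$, $Q=HM$, and $Q+Q^T-M^THM=G$ from \eqref{V6} then yields the Fejer-type inequality
$$
\|v^{k+1}-Lw^\ast\|_H^2\;\le\;\|v^k-Lw^\ast\|_H^2\;-\;\|v^k-\widetilde v^k\|_G^2.
$$
Telescoping gives $\sum_k \|v^k-\widetilde v^k\|_G^2\le\|v^0-Lw^\ast\|_H^2<\infty$. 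Because $G$ and $M^THM$ are both positive definite on the same finite-dimensional space there is a constant $c>0$ with $a_k\le c\,\|v^k-\widetilde v^k\|_G^2$, so summability of $a_k$ follows.

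The main obstacle is monotonicity. I would write \eqref{G44} at iteration $k$ with the test point $w=\widetilde w^{k+1}$ and at iteration $k+1$ with $w=\widetilde w^k$, then add. The $\theta$-terms cancel, and because the linear part of $F$ is skew-symmetric in both Example~\ref{E1} and Example~\ref{E2}, the $F$-cross-term $(\widetilde w^{k+1}-\widetilde w^k)^T\bigl(F(\widetilde w^k)-F(\widetilde w^{k+1})\bigr)$ vanishes. Setting $d^k:=v^k-\widetilde v^k$ and $\delta:=d^k-d^{k+1}$, the identity $\widetilde v^k-\widetilde v^{k+1}=Md^k-\delta$ reduces the surviving inequality to
$$
(Md^k)^T Q\,\delta\;\ge\;\delta^T Q\,\delta.
$$
Doubling, symmetrizing with the help of $H=H^T$, and then invoking $Q+Q^T=G+M^THM$ from \eqref{V6} gives $2(Md^k)^T H(M\delta)\ge \|\delta\|_G^2+\|M\delta\|_H^2$. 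Since a direct expansion of $\|Md^{k+1}\|_H^2=\|Md^k-M\delta\|_H^2$ yields $a_k-a_{k+1}=2(Md^k)^T H(M\delta)-\|M\delta\|_H^2$, I conclude $a_k-a_{k+1}\ge \|\delta\|_G^2\ge 0$, which is exactly the desired monotonicity.

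Combining the two parts finishes the proof: $(t+1)a_t\le \sum_{k=0}^{t}a_k\le c\,\|v^0-Lw^\ast\|_H^2$, so $a_t=O(1/t)$. The delicate ingredient is the skew-symmetry step in the monotonicity argument: it is precisely what makes the sum of two VIs collapse into a one-sided inequality in $d^k,d^{k+1}$ alone, with no residual $\theta$- or $F$-terms. Everything else is algebraic bookkeeping with \eqref{V5}--\eqref{V6}.
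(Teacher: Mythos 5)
Your proof is correct and is essentially the same argument that underlies this theorem: the paper does not reprove Theorem \ref{T2} (it quotes it from the cited works of He et al.), and those references establish it exactly as you do, combining the Fej\'er-type contraction $\|v^{k+1}-Lw^*\|_H^2\le\|v^k-Lw^*\|_H^2-\|v^k-\widetilde v^k\|_G^2$ (hence summability, with $\|M(v^k-\widetilde v^k)\|_H^2\le c\,\|v^k-\widetilde v^k\|_G^2$ since $G\succ0$) with the monotone decrease of $\|M(v^k-\widetilde v^k)\|_H^2$, obtained by adding the prediction inequalities at iterations $k$ and $k+1$ with crossed test points and using the skew-symmetry identity $(w-w')^T(F(w)-F(w'))=0$ together with $Q=HM$ and $Q+Q^T=G+M^THM$. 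Both halves of your algebra check out, so nothing further is needed.
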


In the following, we give some algorithms that satisfying \eqref{V3}-\eqref{V4} for solving \eqref{P3} and \eqref{P2} for better understanding of   prediction-correction framework \eqref{V3}-\eqref{V4}.
\subsection{{Algorithm satisfying \eqref{V3}-\eqref{V4} for solving \eqref{P3} with $m=2$}}\label{s2.2}
Consider  \eqref{P3} with $m=2$,  i.e.,
\begin{equation}\label{G23}
\min\limits_{x_1,x_2}\left\{f_1(x_1)+f_2(x_2):~ A_1x_1+A_2x_2=b\right\}.
\end{equation}

The strictly contractive Peaceman-Rachford splitting method \cite{he2014strictly,he2016convergence} for solving \eqref{G23} is given by:
\begin{equation}\label{G24}
\begin{cases}
x_1^{k+1}\in\arg\min\limits_{x_1}\left\{
f_1(x_1)-x_1^TA_1^T\lambda^k+\frac{\beta}{2}\|A_1x_1+A_2x_2^k-b\|^2+\frac{1}{2}\|x_1-x_1^k\|^2_P
\right\},\\
\lambda^{k+\frac{1}{2}}=\lambda^k-r\beta(A_1x_1^{k+1}+A_2x_2^k-b),\\
x_2^{k+1}\in\arg\min\limits_{x_2}\left\{
f_2(x_2)-x_2^TA_2^T\lambda^{k+\frac{1}{2}}+\frac{\beta}{2}\|A_1x_1^{k+1}+A_2x_2-b\|^2
\right\},	\\
\lambda^{k+1}=\lambda^{k+\frac{1}{2}}-s\beta(A_1x_1^{k+1}+A_2x_2^{k+1}-b),
~P\succ 0,~\beta>0.
\end{cases}
\end{equation}

\begin{remark}\label{r1}
		Algorithm \eqref{G24} reduces to ADMM when $r=0$ and $P=0$. Different from  ADMM, Algorithm \eqref{G24} updates $\lambda$ twice and convergence is established under weak conditions in \eqref{G53}.  As shown in \cite{he2014strictly,he2016convergence}, Algorithm \eqref{G24} enjoys  better numerical performance compared to ADMM.  Algorithm \eqref{G24} with  $r=s=1$ and $\alpha=0$ reduces to Peaceman-Rachford splitting method  (PRSM) for solving the dual of \eqref{G23}. Howere, the convergence of PRSM is established under the  strongly convex assumption. In order to remove the strongly convex assumption, 	He et al. \cite{he2014strictly,he2016convergence}  propose  some conditions on $r$ and $s$ to ensure convergence (or see \eqref{G53}). 
		
		We can set $P=\alpha I_{n_1}-\beta A_1^TA_1$ with $\alpha>\|A_1\|^2$ in order to enjoy easily solved subproblems.
\end{remark} 

For convenience, we define
	\begin{equation}\label{G25}
	\begin{aligned}
	&{L}=\begin{pmatrix}
	I_{n_1}&0&0\\
	0&I_{n_2}&0\\
	0&0&I_l
	\end{pmatrix},~
	Q=\begin{pmatrix}P&0&0\\
	0&\beta A_2^TA_2&-rA_2^T\\
	0&-A_2&\frac{1}{\beta}I_l
	\end{pmatrix},~M=\begin{pmatrix}
	I_{n_1}&0&0\\
	0&I_{n_2}&0\\
	0&-s\beta A_2&(r+s)I_l
	\end{pmatrix}.
	\end{aligned}
	\end{equation}
\begin{theorem}\label{t2.1}
For $L,~Q,~M$ defined in \eqref{G25} and $\theta(u), ~u,~w,~F(w),~T(w)$ defined in Example \ref{E1} with $m=2$, it holds that:
	\begin{itemize}
	\item[\textnormal{({1})}]     Algorithm \eqref{G24} satisfies  \eqref{V3}-\eqref{V4}.
	\item[\textnormal{({2})}]   $H$ and $G$ satisfying \eqref{V5}-\eqref{V6} are positive definite if $A_2$ is  full  column rank,   \begin{equation}\label{G53}
	r\in(-1,1),~s\in(0,1)~{\rm and}~r+s>0.
	\end{equation}
	\end{itemize}
\end{theorem}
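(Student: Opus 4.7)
\medskip
\noindent\textbf{Proof plan for Theorem \ref{t2.1}.}

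\smallskip
\noindent\emph{Part (1): Algorithm \eqref{G24} fits the template \eqref{V3}--\eqref{V4}.}
Since $L = I$ in \eqref{G25}, I only need to recover the correct predictor $\widetilde{w}^k$ and then check that the $(x_1,x_2,\lambda)$-blocks of $0\in T(\widetilde w^k)+Q(\widetilde w^k-w^k)$ reproduce the three subproblems of \eqref{G24}, and that $w^{k+1}=w^k-M(w^k-\widetilde{w}^k)$ reproduces the final update. The natural choice is $\widetilde{x}_1^k=x_1^{k+1}$, $\widetilde{x}_2^k=x_2^{k+1}$, together with the auxiliary dual
\[
\widetilde{\lambda}^k \;=\; \lambda^k-\beta(A_1\widetilde{x}_1^k+A_2 x_2^k-b),
\]
because the $\lambda$-row of $0\in T(\widetilde w^k)+Q(\widetilde w^k-w^k)$ is exactly $A_1\widetilde x_1^k+A_2\widetilde x_2^k-b-A_2(\widetilde x_2^k-x_2^k)+\tfrac{1}{\beta}(\widetilde\lambda^k-\lambda^k)=0$, which collapses to this definition. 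The $x_1$-row then reads $0\in\partial f_1(\widetilde x_1^k)-A_1^T\widetilde\lambda^k+P(\widetilde x_1^k-x_1^k)$, i.e.\ the first subproblem of \eqref{G24} after substituting $\widetilde\lambda^k$. For the $x_2$-row I will rewrite $\lambda^{k+1/2}=(1-r)\lambda^k+r\widetilde\lambda^k$ using the identity $A_1\widetilde x_1^k+A_2 x_2^k-b=(\lambda^k-\widetilde\lambda^k)/\beta$, then substitute into the second subproblem's optimality to reach $0\in\partial f_2(\widetilde x_2^k)-A_2^T\widetilde\lambda^k+\beta A_2^TA_2(\widetilde x_2^k-x_2^k)-rA_2^T(\widetilde\lambda^k-\lambda^k)$, which is exactly the $x_2$-row of the prediction step with $Q$ from \eqref{G25}. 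For the correction step, $x_i^{k+1}=x_i^k-(x_i^k-\widetilde x_i^k)=\widetilde x_i^k$ is immediate, and a direct substitution shows the $\lambda$-row $\lambda^{k+1}=\lambda^k+(r+s)(\widetilde\lambda^k-\lambda^k)+s\beta A_2(x_2^k-\widetilde x_2^k)$ matches the two successive multiplier updates $\lambda^{k+1/2}$ and $\lambda^{k+1}$ in \eqref{G24}.

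\smallskip
\noindent\emph{Part (2): Positive definiteness of $H$ and $G$.}
Because $M$ is block lower triangular with $I$ on the diagonal in the $x$-blocks and $(r+s)I$ in the $\lambda$-block (so $M$ is invertible thanks to $r+s>0$), I can write $M^{-1}$ explicitly and compute
\[
H \;=\; QM^{-1} \;=\; \begin{pmatrix} P & 0 & 0 \\[2pt] 0 & \tfrac{r+s-rs}{r+s}\,\beta A_2^TA_2 & -\tfrac{r}{r+s}A_2^T \\[3pt] 0 & -\tfrac{r}{r+s}A_2 & \tfrac{1}{(r+s)\beta}I \end{pmatrix},
\]
which is \emph{symmetric}. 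The upper-left block $P\succ 0$ is given, so I only need the lower-right $2\times 2$ block to be positive definite; by a Schur-complement calculation against $\tfrac{1}{(r+s)\beta}I$, this reduces to the condition $\beta(1-r)A_2^TA_2\succ 0$, which holds since $r<1$ and $A_2$ has full column rank. Hence \eqref{V5} is verified.

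\smallskip
For \eqref{V6}, I use the symmetry of $H$ to get $M^THM=M^TQ$, so
\[
G \;=\; Q^T+Q-M^TQ \;=\; \begin{pmatrix} P & 0 & 0 \\[2pt] 0 & (1-s)\beta A_2^TA_2 & (s-1)A_2^T \\[3pt] 0 & (s-1)A_2 & \tfrac{2-r-s}{\beta}I \end{pmatrix}.
\]
Applying the Schur complement to the lower-right block (which is positive definite because $r,s<1$ forces $2-r-s>0$), the Schur complement equals $\beta\,\tfrac{(1-r)(1-s)}{2-r-s}\,A_2^TA_2$, which is positive definite precisely because $r<1$, $s<1$, and $A_2$ has full column rank. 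Thus $G\succ 0$.

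\smallskip
\noindent\emph{Expected difficulty.} The bookkeeping in Part (1) — in particular rewriting $\lambda^{k+1/2}$ in terms of $\widetilde\lambda^k$ so that the $x_2$-subproblem optimality condition lines up with the off-diagonal entries $-rA_2^T$ of $Q$ — is the fiddliest step; once the correct $\widetilde\lambda^k$ is identified, the rest is mechanical. In Part (2) the only real care needed is in the two Schur-complement simplifications, where the algebraic identities $r+s-rs-r^2=(1-r)(r+s)$ and $(1-s)-(1-s)^2/(2-r-s)=(1-s)(1-r)/(2-r-s)$ produce the clean factorizations that expose the role of the conditions \eqref{G53}.
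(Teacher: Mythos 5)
Your proposal is correct. Part (1) follows essentially the same route as the paper: you choose the same predictor $\widetilde{x}_1^k=x_1^{k+1}$, $\widetilde{x}_2^k=x_2^{k+1}$, $\widetilde{\lambda}^k=\lambda^k-\beta(A_1\widetilde{x}_1^k+A_2x_2^k-b)$, match the three optimality conditions row by row against $Q$ from \eqref{G25}, and check the $\lambda$-row of the correction step by rewriting $\lambda^{k+1/2}$ through $A_1\widetilde{x}_1^k+A_2x_2^k-b=(\lambda^k-\widetilde{\lambda}^k)/\beta$, exactly as in the paper's verification. The genuine difference is Part (2): the paper does not prove positive definiteness of $H$ and $G$ at all, deferring to a ``similar proof'' in He et al.\ (Lemma 4.1 and p.\ 1480 of the cited reference), whereas you give a self-contained computation — explicit $M^{-1}$, the symmetric $H=QM^{-1}$ with lower-right Schur complement $(1-r)\beta A_2^TA_2$, and $G=Q^T+Q-M^TQ$ with Schur complement $\beta\tfrac{(1-r)(1-s)}{2-r-s}A_2^TA_2$ — and I verified these block computations and the factorizations $r+s-rs-r^2=(1-r)(r+s)$ and $(1-s)-\tfrac{(1-s)^2}{2-r-s}=\tfrac{(1-s)(1-r)}{2-r-s}$ are all correct. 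What your route buys is transparency: it makes the paper self-contained and shows precisely which parts of \eqref{G53} are used (only $r<1$, $s<1$, $r+s>0$, together with $P\succ0$, $\beta>0$ and $A_2$ full column rank enter the argument, so \eqref{G53} is comfortably sufficient); what the citation buys is brevity and consistency with the strictly contractive PRSM literature where these matrices were first analyzed. No gaps.
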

 \begin{proof}
 	Setting
 	\begin{equation*}
 	\begin{aligned}
 	\widetilde{x}_1^k:=x_1^{k+1},~ \widetilde{x}_2^k:=x_2^{k+1}~{\rm and}~\widetilde{\lambda}^k:=\lambda^k-\beta(A_1\widetilde{x}_1^{k}+A_2x_2^k-b)
 	\end{aligned}
 	\end{equation*} in Algorithm \eqref{G24}.
 	
\noindent Proof of \textnormal{({1})}.
 The optimality condition of the $x_1$-subproblem reads as:
\begin{equation*}\begin{aligned}
0&\in \partial f_1(\widetilde x_1^k)-A_1^T{[\lambda^k-\beta(A_1\widetilde{x}_1^{k}+A_2x_2^k-b)]}+\alpha(\widetilde{x}_1^k-x_1^k)\\&=
 \partial f_1(\widetilde x_1^k)-A_1^T{\widetilde{\lambda}^k}+P(\widetilde{x}_1^k-x_1^k).
\end{aligned}
\end{equation*}
 The optimality condition of the $x_2$-subproblem reads as:
\begin{equation*}\begin{aligned}
0&\in \partial f_2(\widetilde x_2^k)-A_2^T[
\lambda^k-r\beta(A_1\widetilde{x}_1^{k}+A_2x_2^k-b)
]+\beta A_2^T[ A_1\widetilde{x}_1^{k}+A_2\widetilde{x}_2^k-b]\\&=
\partial f_2(\widetilde x_2^k)-A_2^T\widetilde{\lambda}^k+\beta A_2^T A_2(\widetilde{x}_2^k-x_2^k)-rA_2^T(\widetilde{\lambda}^k-\lambda^k).
\end{aligned}
\end{equation*}
 The definition of $\widetilde{\lambda}^k$ can be rewritten as:
\begin{equation*}
0=	A_1\widetilde{x}_1^{k}+A_2\widetilde{x}_2^k-b-A_2(\widetilde{x}_2^k-x_2^k)+\frac{1}{\beta}(\widetilde{\lambda}^k-\lambda^k).
\end{equation*}
Combining the above three relations, we obtain
	\begin{equation*}
	\begin{aligned}
	&0\in \begin{pmatrix}
	\partial f_1(\widetilde{x}_1^k)\\\partial f_2(\widetilde{x}_2^k)\\0
	\end{pmatrix}+
	\begin{pmatrix}
	-A_1^T\widetilde{\lambda}^k\\-A_2^T\widetilde{\lambda}^k\\A_1\widetilde{x}_1^{k}+A_2\widetilde{x}_2^k-b
	\end{pmatrix}
	+\begin{pmatrix}
	P(\widetilde{x}_1^k-x_1^k)\\
	\beta A_2^T A_2(\widetilde{x}_2^k-x_2^k)-rA_2^T(\widetilde{\lambda}^k-\lambda^k) \\
	-A_2(\widetilde{x}_2^k-x_2^k)+\frac{1}{\beta}(\widetilde{\lambda}^k-\lambda^k)
	\end{pmatrix},
	\end{aligned}
	\end{equation*}
	which is equivalent to$$
	\begin{aligned}
	&	0\in \begin{pmatrix}
\partial f_1(\widetilde{x}_1^k)\\\partial f_2(\widetilde{x}_2^k)\\0
	\end{pmatrix}+
	\begin{pmatrix}
	-A_1^T\widetilde{\lambda}^k\\-A_2^T\widetilde{\lambda}^k\\A_1\widetilde{x}_1^{k}+A_2\widetilde{x}_2^k-b
	\end{pmatrix}+{\begin{pmatrix}P&0&0\\
		0&\beta A_2^TA_2&-rA_2^T\\
		0&-A_2&\frac{1}{\beta}I_l
		\end{pmatrix}}_{}{\begin{pmatrix}
		x_1^k-\widetilde{x}_1^k\\
		x_2^k-\widetilde{x}_2^k\\
		\lambda^k-\widetilde{\lambda}^k
		\end{pmatrix}}.
	\end{aligned}  	
	$$Then  prediction step holds. It holds that 
\begin{equation}
\begin{aligned}
\lambda^{k+1}=&\lambda^{k+\frac{1}{2}}-s\beta(A_1\widetilde x_1^{k}+A_2\widetilde x_2^{k}-b)\\=&\lambda^k-r\beta(A_1\widetilde{x}_1^{k}+A_2x_2^k-b)-s\beta(A_1\widetilde x_1^{k}+A_2 x_2^{k}-b)+s\beta A_2(x_2^k-\widetilde x_2^{k})\\=&
\lambda^{k}-r(\lambda^k-\widetilde \lambda^k)-s(\lambda^k-\widetilde \lambda^k)+s\beta A_2(x_2^k-\widetilde x_2^{k}).
\end{aligned}
\end{equation}Thus, together with 	$\widetilde{x}_1^k=x_1^{k+1}$ and  $\widetilde{x}_2^k=x_2^{k+1}$, we have
$$\begin{pmatrix}
	x_1^{k+1}\\x_2^{k+1}\\\lambda^{k+1}
\end{pmatrix}=
\begin{pmatrix}
x_1^{k}\\x_2^{k}\\\lambda^{k}\end{pmatrix}-\begin{pmatrix}
I_{n_1}&0&0\\
0&I_{n_2}&0\\
0&-s\beta A_2&(r+s)I_l
\end{pmatrix}
\begin{pmatrix}
x_1^k-\widetilde x_1^{k}\\x_2^k-\widetilde x_2^{k}\\\lambda^k-\widetilde \lambda^{k}
\end{pmatrix}.
$$
The correction step holds.

\noindent Proof of \textnormal{({2})}.  For the positive definiteness of $H$ and $G$ satisfying \eqref{V5}-\eqref{V6}, we can refer to  \cite[Lemma 4.1]{he2016convergence} and \cite[Page 1480]{he2016convergence} for a similar proof. 
\end{proof}

\subsection{{Algorithm satisfying \eqref{V3}-\eqref{V4} for solving \eqref{P3}}}\label{s2.3}
The following algorithm for solving \eqref{P3} was first presented by  He et al. \cite{he2021extensions}.
\begin{framed}
	\noindent{\bf[Prediction step.]} With given $\beta>0$ and $(x_1^{k}, x_2^{k},\dots, x_m^{k},\lambda^k)$, find $(
	\widetilde{x}_1^k,\dots,\widetilde{x}_m^k,\widetilde \lambda^k)$ by
	\begin{footnotesize}
		\begin{equation}\label{G36}
		\begin{cases}
		\widetilde{x}_1^k=\arg\min\limits_{x_1}\left\{
		f_1(x_1)-x_1^TA_1^T\lambda^k+\frac{\beta}{2}\|A_1(x_1-x_1^k)\|^2
		\right\},\\
		\widetilde{x}_2^k=\arg\min\limits_{x_2}\left\{
		f_2(x_2)-x_2^TA_2^T
		\lambda^k+\frac{\beta}{2}\|A_1(\widetilde{x}_1^{k}-x_1^k)+A_2(x_2-x_2^k)\|^2
		\right\},\\
		~~~~~~~~~\vdots\\
		\widetilde{x}_i^k=\arg\min\limits_{x_i}\left\{
		f_i(x_i)-x_i^TA_2^T
		\lambda^k+\frac{\beta}{2}\|\sum_{j=1}^{i-1}A_j(\widetilde{x}_j^{k}-x_j^k)+A_i(x_i-x_i^k)\|^2
		\right\},\\
		~~~~~~~~~\vdots\\
		\widetilde{x}_m^k=\arg \min\limits_{x_m}\left\{
		f_m(x_m)-x_m^TA_2^T
		\lambda^k+\frac{\beta}{2}\|\sum_{j=1}^{m-1}A_j(\widetilde{x}_j^{k}-x_j^k)+A_m(x_m-x_m^k)\|^2
		\right\}\\
		\widetilde{\lambda}^k=\arg\max\limits_{\lambda}\left\{
		-\lambda^T(\sum_{j=1}^{m}A_j\widetilde{x}_j^k-b)-\frac{1}{2\beta}\|\lambda-\lambda^k\|^2
		\right\}.
		\end{cases}
		\end{equation}
	\end{footnotesize}
	\noindent{\bf [Correction step.]}Update  $(A_1x_1^{k+1},A_2x_2^{k+1},\dots,A_mx_m^{k+1},\lambda^{k+1})$ by
	\begin{footnotesize}
		\begin{equation}\label{G37}
		\begin{pmatrix}
		\sqrt{\beta}A_1x_1^{k+1}\\	\sqrt{\beta}A_2x_2^{k+1}\\\vdots\\	\sqrt{\beta}A_mx_m^{k+1}\\\frac{1}{	\sqrt{\beta}}\lambda^{k+1}
		\end{pmatrix}=\begin{pmatrix}
		\sqrt{\beta}A_1x_1^{k}\\	\sqrt{\beta}A_2x_2^{k}\\\vdots\\	\sqrt{\beta}A_mx_m^{k}\\\frac{1}{	\sqrt{\beta}}\lambda^{k}
		\end{pmatrix}-\begin{pmatrix}
		\alpha I_l&-\alpha I_l&0&\dots&0\\
		0&\alpha I_l&\ddots&\ddots&\vdots\\
		\vdots&\ddots&\ddots&-\alpha I_l&0\\
		0&\dots&0&\alpha I_l&0\\
		-\alpha I_l&0&\dots&0&I_l
		\end{pmatrix}\begin{pmatrix}
		\sqrt{\beta}( 	A_1x_1^{k}-A_1\widetilde{x}_1^k)\\	\sqrt{\beta}(A_2x_2^{k}-A_2\widetilde{x}_2^k)\\\vdots\\	\sqrt{\beta}(A_mx_m^{k}-A_m\widetilde{x}_m^k)\\\frac{1}{	\sqrt{\beta}}(\lambda^{k}-\widetilde{\lambda}^k)
		\end{pmatrix}.
		\end{equation}
	\end{footnotesize}
\end{framed}

For convenience, we define
\begin{small}
	\begin{equation}\label{G38}
	\begin{aligned}
	&
	L =\text{Diag}
	\begin{pmatrix}
	\sqrt{\beta} A_1, &
	\sqrt{\beta}A_2,&
	\dots,
	\sqrt{\beta}A_m,&
	\frac{1}{	\sqrt{\beta}}I_l
	\end{pmatrix},
	\\&
	Q=\begin{pmatrix}
	I_l&0&\dots&0&I_l\\
	I_l&I_l&\ddots&\vdots&I_l\\
	\vdots&\vdots&\ddots&0&\vdots\\
	I_l&I_l&\dots&I_l&I_l\\
	0&0&\dots&0&I_l
	\end{pmatrix},~M=\begin{pmatrix}
	\alpha I_l&-\alpha I_l&0&\dots&0\\
	0&\alpha I_l&\ddots&\ddots&\vdots\\
	\vdots&\ddots&\ddots&-\alpha I_l&0\\
	0&\dots&0&\alpha I_l&0\\
	-\alpha I_l&0&\dots&0&I_l
	\end{pmatrix}.
	\end{aligned}
	\end{equation}
\end{small}
\begin{theorem}\label{t2.2}
	For $L,~Q,~M$ defined in \eqref{G38} and $\theta(u)$, $u$, $w$, $F(w),~T(w)$  defined in Example \ref{E1}, it holds that 
	\begin{itemize}
		\item[\textnormal{({1})}]     Algorithm \eqref{G36}-\eqref{G37} satisfies  \eqref{V3}-\eqref{V4}.
		\item[\textnormal{({2})}]   $H$ and $G$ satisfying \eqref{V5}-\eqref{V6} are positive definite if $\alpha\in(0,1)$.
	\end{itemize}
\end{theorem}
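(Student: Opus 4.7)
The plan is to verify the prediction step (V3) and the correction step (V4) separately. For the prediction step, I will first write the optimality condition of each $x_i$-subproblem in (2.13), which takes the form
\begin{equation*}
0 \in \partial f_i(\widetilde x_i^k) - A_i^T\lambda^k + \beta A_i^T\sum_{j=1}^{i} A_j(\widetilde x_j^k - x_j^k).
\end{equation*}
The $\lambda$-subproblem gives $\lambda^k-\widetilde\lambda^k=\beta(A\widetilde x^k-b)$, which I use to rewrite $-A_i^T\lambda^k$ as $-A_i^T\widetilde\lambda^k + A_i^T(\widetilde\lambda^k-\lambda^k)$. After this substitution every $x_i$-row has the form of the $i$-th block row of $T(\widetilde w^k)+L^TQL(\widetilde w^k-w^k)$, because a direct computation with $L$ and $Q$ in (2.15) shows that the $(i,j)$-block of $L^TQL$ is $\beta A_i^TA_j$ for $j\le i$, $0$ for $i<j\le m$, $A_i^T$ for $j=m+1$, and $\frac{1}{\beta}I_l$ for the $(m+1,m+1)$ block, with the last column of zeros elsewhere. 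The rewritten $\lambda$-subproblem equation $A\widetilde x^k-b+\frac{1}{\beta}(\widetilde\lambda^k-\lambda^k)=0$ then matches exactly the $(m+1)$-th block row, completing (V3).

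\textbf{Plan for the correction step.} Here I just multiply out $M(Lw^k-L\widetilde w^k)$ row by row. Rows $1,\ldots,m-1$ yield $\sqrt\beta A_ix_i^{k+1}=\sqrt\beta A_ix_i^k-\alpha\sqrt\beta A_i(x_i^k-\widetilde x_i^k)+\alpha\sqrt\beta A_{i+1}(x_{i+1}^k-\widetilde x_{i+1}^k)$, row $m$ gives the same without the second term, and the $(m+1)$-th row gives $\lambda^{k+1}=\widetilde\lambda^k+\alpha\beta A_1(x_1^k-\widetilde x_1^k)$. These coincide with the stated block update (2.14) so that $Lw^{k+1}=Lw^k-M(Lw^k-L\widetilde w^k)$.

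\textbf{Plan for part (2).} Since every block of $Q$ and $M$ is a scalar multiple of $I_l$, the positive definiteness of $H$ and $G$ reduces to that of two $(m+1)\times(m+1)$ scalar matrices, after which the block statement follows by tensoring with $I_l$. The scalar $M$ is block lower-triangular with top-left block $\alpha(I_m-S)$, where $S$ is the upper-shift, so its inverse has top-left block $\frac{1}{\alpha}U$ (with $U$ the upper-triangular matrix of ones), bottom-right entry $1$, and bottom-left row $(1,\ldots,1)$. Multiplying out $H=QM^{-1}$ yields a symmetric matrix whose top-left $m\times m$ block equals $\frac{1}{\alpha}[\min(i,j)]+J_m$, with last row/column all ones and $(m+1,m+1)$ entry $1$; positive definiteness then follows by a Schur-complement argument against the $(m+1,m+1)$ corner, using the known positivity of the Gram matrix $[\min(i,j)]$. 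For $G$ I will use the identity $M^THM=M^TQ$ (from $HM=Q$), giving $G=Q^T+Q-M^TQ$, and expand in the scalar coordinates to read off an explicit quadratic form.

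\textbf{Main obstacle.} The main obstacle is establishing $G\succ 0$ under the tight condition $\alpha\in(0,1)$. Unlike $H$, where the Gram-matrix structure makes positivity transparent, the expression $Q^T+Q-M^TQ$ produces mixed terms whose sign depends on $\alpha$, and a naive bound does not isolate the threshold $\alpha<1$. I expect to handle this by writing the associated quadratic form as a sum of squares plus a quadratic in a telescoping difference $(v_i-v_{i+1})$, which should reveal that the only coefficient requiring $\alpha<1$ comes from the coupling between the primal-shift terms and the dual row. Once this is done, reintroducing the factors of $I_l$ finishes the argument.
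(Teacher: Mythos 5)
Your part (1) is correct and is essentially the paper's own argument: write the optimality conditions of the $x_i$- and $\lambda$-subproblems, split $-A_i^T\lambda^k=-A_i^T\widetilde\lambda^k+A_i^T(\widetilde\lambda^k-\lambda^k)$ (this is an identity; the $\lambda$-subproblem is only needed to match the last block row), and recognize the residuals as $L^TQL(\widetilde w^k-w^k)$ with the block structure you describe --- one small slip: the zero blocks are the last \emph{row} entries $(m+1,j)$, $j\le m$, not the last column, which is $A_i^T$ as you also state. The correction step needs no computation at all, since \eqref{G37} is literally \eqref{V4} for $v=Lw$ with the given $M$; your row-by-row expansion just re-reads it.

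Where you genuinely differ from the paper is part (2): the paper proves nothing here and cites Lemmas 7.1--7.2 of \cite{he2021extensions}, whereas you propose a self-contained verification, and your plan does work --- in fact more cleanly than you fear. With $Q=\bar Q\otimes I_l$, $M=\bar M\otimes I_l$, $\bar Q=\bigl(\begin{smallmatrix}L_m&\mathbf 1\\0&1\end{smallmatrix}\bigr)$ ($L_m$ the lower-triangular all-ones matrix) and $\bar M=\bigl(\begin{smallmatrix}\alpha(I-S)&0\\-\alpha e_1^T&1\end{smallmatrix}\bigr)$, your formulas for $\bar M^{-1}$ and $\bar H=\bar Q\bar M^{-1}=\bigl(\begin{smallmatrix}\frac{1}{\alpha}[\min(i,j)]+J_m&\mathbf 1\\\mathbf 1^T&1\end{smallmatrix}\bigr)$ are exactly right, and the Schur complement against the corner entry is $\frac{1}{\alpha}[\min(i,j)]\succ0$, so $H\succ0$ only needs $\alpha>0$. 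For $G$ your anticipated ``main obstacle'' dissolves: using $G=Q^T+Q-M^TQ$, the identities $(I-S^T)L_m=I_m$ and $(I-S^T)\mathbf 1=e_1$ give $\bar M^T\bar Q=\mathrm{diag}(\alpha,\dots,\alpha,1)$, hence $\bar G=\bigl(\begin{smallmatrix}J_m+(1-\alpha)I_m&\mathbf 1\\\mathbf 1^T&1\end{smallmatrix}\bigr)$ with Schur complement $(1-\alpha)I_m$; no sum-of-squares or telescoping argument is needed, and the threshold $\alpha<1$ appears immediately. Combining the two conditions yields exactly $\alpha\in(0,1)$. So your route buys a short, self-contained proof of (2) at the cost of an explicit matrix computation, while the paper buys brevity by outsourcing it to \cite{he2021extensions}.
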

\begin{proof}
Proof of \textnormal{({1})}.
For $i=1,2,\dots,m$, 	the optimality condition of the $x_i$-subproblem is given by
\begin{equation*}
	\begin{aligned}
0&\in	\partial f_i(\widetilde x_i^k)-A_i^T\lambda^k+\beta A_i^T\sum_{j=1}^{i}A_j(\widetilde{x}_j^{k}-x_j^k)\\&=
	\partial f_i(\widetilde x_i^k)-	A_i^T\widetilde{\lambda}^k+\beta A_i^T\sum_{j=1}^{i}A_j(\widetilde{x}_j^{k}-x_j^k)+A_i^T(\widetilde{\lambda}^k-\lambda^k).
	\end{aligned}
\end{equation*}
The optimality condition of the $\lambda$-subproblem is given by
\begin{equation*}
0=	(\sum_{j=1}^{m}A_j\widetilde{x}_j^k-b)+\frac{1}{\beta}(\widetilde{\lambda}^k-\lambda^k).
\end{equation*}
Combining the above two relations, we obtain
\begin{footnotesize}
	\begin{equation*}
	\begin{aligned}
	&0\in 
	\begin{pmatrix}
	\partial f_1(\widetilde x_1^k)\\
	\partial f_2(\widetilde x_2^k)\\
	\vdots\\
	\partial f_m(\widetilde x_m^k)\\
	0
	\end{pmatrix}+\begin{pmatrix}
	-A_1^T\widetilde \lambda^k\\
	-A_2^T\widetilde\lambda^k\\
	\vdots\\
	-A_m^T\widetilde\lambda^k\\
	\sum_{j=1}^{m}A_j\widetilde{x}_j^k-b
	\end{pmatrix}+\begin{pmatrix}
\beta A_1^TA_1(\widetilde{x}_1^{k}-x_1^k)+A_1^T(\widetilde{\lambda}^k-\lambda^k)\\
	\beta A_2^T\sum_{j=1}^{2}A_j(\widetilde{x}_j^{k}-x_j^k)+A_2^T(\widetilde{\lambda}^k-\lambda^k)\\
	\vdots\\
	\beta A_m^T\sum_{j=1}^{m}A_j(\widetilde{x}_j^{k}-x_j^k)+A_m^T(\widetilde{\lambda}^k-\lambda^k)\\
\frac{1}{\beta}(	\widetilde{\lambda}^k-\lambda^k
)	\end{pmatrix},
	\end{aligned}
	\end{equation*}
\end{footnotesize}
which is equivalent to
\begin{footnotesize}
\begin{equation*}
\begin{aligned}
&0\in 
\begin{pmatrix}
\partial f_1(\widetilde x_1^k)\\
\partial f_2(\widetilde x_2^k)\\
\vdots\\
\partial f_m(\widetilde x_m^k)\\
0
\end{pmatrix}+\begin{pmatrix}
-A_1^T\widetilde \lambda^k\\
-A_2^T\widetilde\lambda^k\\
\vdots\\
-A_m^T\widetilde\lambda^k\\
\sum_{j=1}^{m}A_j\widetilde{x}_j^k-b
\end{pmatrix}+L^T\begin{pmatrix}
I_l&0&\dots&0&I_l\\
I_l&I_l&\ddots&\vdots&I_l\\
\vdots&\vdots&\ddots&0&\vdots\\
I_l&I_l&\dots&I_l&I_l\\
0&0&\dots&0&I_l
\end{pmatrix}L\begin{pmatrix}
\widetilde{x}_1^k-x_1^k\\
\widetilde{x}_2^k-x_2^k\\
\vdots\\
\widetilde{x}_m^k-x_m^k\\
	\widetilde{\lambda}^k-\lambda^k
\end{pmatrix}.
\end{aligned}
\end{equation*}
\end{footnotesize}
Then the prediction step  holds.  The correction step is easy to verified.

\noindent Proof of \textnormal{({2})}. We can refer to \cite[Lemma 7.1, 7.2]{he2021extensions}.
\end{proof}

\subsection{{Algorithm satisfying \eqref{V3}-\eqref{V4} for solving \eqref{P2}}}\label{s2.4}
The following algorithm is presented by He et al. \cite{doi:10.1137/21M1453463} for solving \eqref{P2}:
\begin{framed}
	\noindent{\bf[Prediction step.]} With given $\beta>0$ and  $(x^k,y^k)$, find $(\widetilde{x}^k,\widetilde{y}^k)$ by
	\begin{equation}\label{G9}
	\begin{cases}
	\widetilde{x}^k=\arg\min\limits_{x}\left\{
	\Phi(x,y^k)+\frac{r}{2}\|x-x^k\|^2
	\right\},\\
	\widetilde{y}^k=\arg\max\limits_{y}\left\{
	\Phi([\widetilde{x}^k+\alpha(\widetilde{x}^k-x^k)],y)-\frac{s}{2}\|y-y^k\|^2
	\right\}.
	\end{cases}
	\end{equation}
	
	\noindent{\bf [Correction step.]} Update $(x^{k+1},y^{k+1})$ by
	\begin{equation}\label{G10}
	\begin{pmatrix}
	x^{k+1}\\y^{k+1}
	\end{pmatrix}=	\begin{pmatrix}
	x^{k}\\y^{k}
	\end{pmatrix}-\begin{pmatrix}
	I_n&0\\
	-(1-\alpha)\frac{1}{s}A&I_m
	\end{pmatrix}	\begin{pmatrix}
	x^{k}-\widetilde{x}^k\\y^{k}-\widetilde{y}^k
	\end{pmatrix}.
	\end{equation}
\end{framed}

For convenience, we define
\begin{equation}\label{G41}
L=\begin{pmatrix}
I_n&0\\0&I_m
\end{pmatrix},~~
Q=\begin{pmatrix}
rI_n&A^T\\
\alpha A&sI_m
\end{pmatrix} ~{\rm and} ~M=\begin{pmatrix}
I_n&0\\
-(1-\alpha)\frac{1}{s}A&I_m
\end{pmatrix}.
\end{equation}

\begin{theorem}\label{t2.3}
	For $L,~Q,~M$ defined in \eqref{G41} and $\theta(u), ~u,~w$, $F(w),~T(w)$ as defined  in Example \ref{E2}, it holds that 
	\begin{itemize}
		\item[\textnormal{({1})}]     Algorithm \eqref{G9}-\eqref{G10} satisfies  \eqref{V3}-\eqref{V4}.
		\item[\textnormal{({2})}]   $H$ and $G$ satisfying \eqref{V5}-\eqref{V6} are positive definite if $$
		rs>(1-\alpha+\alpha^2)\rho(A^TA),~\alpha\in[0,1].
$$
	\end{itemize}
\end{theorem}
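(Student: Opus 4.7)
The plan is to handle the two parts in order by direct calculation, using the same organization as the proofs of Theorems \ref{t2.1} and \ref{t2.2}.

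For part (1), I would set $\widetilde{w}^k=(\widetilde{x}^k,\widetilde{y}^k)$ and write the optimality conditions of the two subproblems in \eqref{G9}. Expanding $\Phi$ yields
\begin{equation*}
0\in\partial f(\widetilde{x}^k)-A^Ty^k+r(\widetilde{x}^k-x^k),\qquad 0\in\partial g(\widetilde{y}^k)+A\widetilde{x}^k+\alpha A(\widetilde{x}^k-x^k)+s(\widetilde{y}^k-y^k).
\end{equation*}
Rewriting the first relation by inserting $\pm A^T\widetilde{y}^k$ gives a term $-A^T\widetilde{y}^k$ matching $T(\widetilde w^k)$ and a correction term $A^T(\widetilde{y}^k-y^k)$. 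Packaging both relations into a column vector and recognizing the correction as $Q(\widetilde{w}^k-w^k)$ with $Q$ as in \eqref{G41} establishes \eqref{V3}. Since $L=I$, the correction \eqref{V4} is literally \eqref{G10}.

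For part (2), I would verify the \eqref{V5}--\eqref{V6} conditions by direct block computation. Using $M^{-1}=\begin{pmatrix}I_n&0\\ \tfrac{1-\alpha}{s}A&I_m\end{pmatrix}$, I get
\begin{equation*}
H=QM^{-1}=\begin{pmatrix} rI_n+\tfrac{1-\alpha}{s}A^TA & A^T\\ A & sI_m\end{pmatrix},
\end{equation*}
which is symmetric, and a Schur-complement argument reduces $H\succ 0$ to $rs>\alpha\,\rho(A^TA)$. Next I would compute $M^THM=\begin{pmatrix} rI_n-\tfrac{\alpha(1-\alpha)}{s}A^TA & \alpha A^T\\ \alpha A & sI_m\end{pmatrix}$ and then
\begin{equation*}
G=Q^T+Q-M^THM=\begin{pmatrix} rI_n+\tfrac{\alpha(1-\alpha)}{s}A^TA & A^T\\ A & sI_m\end{pmatrix}.
\end{equation*}
Applying the Schur complement to $G$ reduces $G\succ 0$ to $rs>(1-\alpha+\alpha^2)\,\rho(A^TA)$. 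Since $(1-\alpha)^2\geq 0$ implies $1-\alpha+\alpha^2\geq \alpha$, the $G$-condition dominates and yields exactly the stated hypothesis for $\alpha\in[0,1]$.

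The main obstacle I anticipate is purely bookkeeping: keeping the off-diagonal signs consistent when canceling the $\tfrac{1-\alpha}{s}A^TA$ terms in $M^TH$ and later in $M^THM$. There are no conceptual hurdles beyond carefully performing these $2\times 2$ block products; once $H$ and $G$ are in the symmetric block forms above, the Schur-complement criterion converts positive-definiteness into scalar inequalities on $rs$ and $\rho(A^TA)$.
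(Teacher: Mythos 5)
Your proposal is correct and, for part (1), follows exactly the paper's argument: the same subproblem optimality conditions, the same insertion of $\pm A^T\widetilde y^k$, and the observation that $L=I$ makes the correction step \eqref{G10} literally \eqref{V4} with $M$ from \eqref{G41}. For part (2) the paper performs no computation and simply cites \cite[Proposition 4.1]{doi:10.1137/21M1453463}, whereas your explicit block forms of $H=QM^{-1}$, $M^THM$ and $G$, together with the Schur-complement reductions to $rs>\alpha\rho(A^TA)$ and $rs>(1-\alpha+\alpha^2)\rho(A^TA)$ (the latter dominating because $(1-\alpha)^2\geq 0$), are all correct, so your write-up is a sound, self-contained verification of the condition that the paper handles by reference.
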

\begin{proof}
	Proof of (1).
	The optimality condition of the $x$-subproblem reads as:
	\begin{equation*}
	\begin{aligned}
	0&\in\partial f(\widetilde{x}^k)
	-A^T{y}^k+r(\widetilde{x}^k-x^k)\\&=
	\partial f(\widetilde{x}^k)
	-A^T\widetilde{y}^k+r(\widetilde{x}^k-x^k)+A^T(\widetilde{y}^k-y^k).
	\end{aligned}
	\end{equation*}
	The optimality condition of the $y$-subproblem reads as:
	\begin{equation*}
	0\in\partial g(\widetilde{y}^k)+
	A[\widetilde{x}^k+\alpha(\widetilde{x}^k-x^k)]+s(\widetilde{y}^k-y^k).
	\end{equation*}
	Combining  the above two relations together yields that
	\begin{equation}\nonumber
	\begin{aligned}
	&0\in\begin{pmatrix}
	\partial f(\widetilde{x}^k)\\\partial g(\widetilde{y}^k)
	\end{pmatrix}+\begin{pmatrix}
	-A^T\widetilde{y}^k\\A\widetilde{x}^k
	\end{pmatrix}+
	\begin{pmatrix}
	r(\widetilde{x}^k-x^k)+A^T(\widetilde{y}^k-y^k)\\
	\alpha A(\widetilde{x}^k-x^k)+s(\widetilde{y}^k-y^k)
	\end{pmatrix}
	.
	\end{aligned}
	\end{equation}
	This is equivalent to:
	\begin{equation}\nonumber
	\begin{aligned}
	&	0\in\begin{pmatrix}
	\partial f(\widetilde{x}^k)\\\partial g(\widetilde{y}^k)
	\end{pmatrix}+\begin{pmatrix}
	-A^T\widetilde{y}^k\\A\widetilde{x}^k
	\end{pmatrix}+\begin{pmatrix}
	rI_n&A^T\\
	\alpha A&sI_m
	\end{pmatrix}\begin{pmatrix}
	\widetilde{x}^k-x^k\\
	\widetilde{y}^k-y^k
	\end{pmatrix}.
	\end{aligned}
	\end{equation}
	Then we obtain the prediction step  \eqref{G16} satisfy \eqref{G27} with  $L$  and  $Q$ defined in \eqref{G41}.   The correction step is easy to verified.
	
	\noindent Proof of (2). We can refer to \cite[Proposition 4.1]{doi:10.1137/21M1453463}.
\end{proof}
\begin{remark}
When $\alpha=1$, the algorithm \eqref{G9}-\eqref{G10} simplifies to the CP algorithm introduced in \cite{chambolle2011first}. Convergence is established under the condition $rs>\rho(A^TA)$. By setting $\alpha=\frac{1}{2}$ in algorithm \eqref{G9}-\eqref{G10}, the convergence condition is relaxed to $rs>0.75\rho(A^TA)$. This extension broadens the permissible step size compared to the original CP algorithm.
\end{remark}

\section{Faster  prediction-correction framework}\label{S2.2}
We first present the following new prediction-correction framework with $\theta(u)$, $u$, $w$, $F(w)$ and $T(w)$  defined in Example \ref{E1} or \ref{E2},  and then establish the convergence rates.

\begin{framed}
	\noindent{\bf[Prediction step.]} With  given  $w^k$ and $\breve w^{k-1}$, find $\breve{w}^k$  such that
	\begin{equation}\label{G27}\tag{FPS}
	0\in (T-F)(\breve w^k)+F(\widetilde{w}^k)+L^TQL(\widetilde{w}^k-w^k),
	\end{equation}
	where $Q^T +Q\succ 0$ (noting that $Q$ is not necessarily symmetric), $L$ is a matrix  and 
	\begin{equation}\label{G14}
	\widetilde{w}^k=\frac{1}{\tau^k}\breve{w}^{k}-\frac{1-\tau^k}{\tau^k}\breve{w}^{k-1},
	\end{equation}
	and the sequence $\{\tau^k\}$ satisfies the following equality:
	\begin{equation}
	1/\tau^{k-1}=(1-\tau^k)/\tau^k,~ \tau^{-1}\in(0,1) \label{G13}.\tag{Y}
	\end{equation}
	\noindent{\bf [Correction step.]} Update $v^{k+1}=Lw^{k+1}$ by
	\begin{equation}\label{G28}\tag{FCS}
	v^{k+1}=v^k-M(v^k-\widetilde{v}^k),
	\end{equation}where $ \widetilde v^k=L\widetilde w^k$. 
\end{framed}

According to \eqref{G27},  there exists  $\breve g^k\in (T-F)( \breve w^k)$ such that 
\begin{equation}
\begin{aligned}
&(w-\breve w^k)^T(\breve g^k+F(\widetilde w^k)+L^TQL(\widetilde w^k-w^k)=0,~\forall w
\\\Longrightarrow&
\theta(u)-\theta(\breve{u}^k)+(w-\breve{w}^k)^TF(\widetilde{w}^k)\geq(w-\breve{w}^k)^TL^TQL(w^k-\widetilde{w}^k),~\forall w,
\end{aligned}
\end{equation}where  the  inequality using convexity of $\theta$. Hence prediction-correction framework \eqref{G27}-\eqref{G28} infers
the following framework (in variational form).

\begin{framed}
	\noindent{\bf[Prediction step.]} With  given  $w^k$ and $\breve  w^{k-1}$, find $\breve{w}^k$  such that
	\begin{equation}\label{G15}
	\theta(u)-\theta(\breve{u}^k)+(w-\breve{w}^k)^TF(\widetilde{w}^k)\geq(w-\breve{w}^k)^TL^TQL(w^k-\widetilde{w}^k), ~\forall w,
	\end{equation}
	where $Q^T +Q\succ 0$ (noting that $Q$ is not necessarily symmetric), $L$ is a matrix and $\widetilde{w}^k$ defined in \eqref{G14} and $\tau^{k}$ satisfying \eqref{G13}.
	
	\noindent{\bf [Correction step.]} Update $v^{k+1}=Lw^{k+1}$ by
	\begin{equation}
	v^{k+1}=v^k-M(v^k-\widetilde{v}^k),
	\end{equation}where $ \widetilde v^k=L\widetilde w^k$. 
\end{framed}


The following property of the sequence $\{\tau^k\}$ satisfying \eqref{G13} is trivial to verify and hence omitted.
\begin{lemma} \label{L5}
	Let $\{\tau^k\}$ satisfy \eqref{G13}. Then $\tau^k=\frac{1}{\tau^{-1}+k+1}$.
\end{lemma}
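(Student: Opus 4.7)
The plan is to turn \eqref{G13} into a linear recurrence in the reciprocals, then iterate. Concretely, I would first rewrite the given relation
\begin{equation*}
\frac{1}{\tau^{k-1}}=\frac{1-\tau^k}{\tau^k}=\frac{1}{\tau^k}-1
\end{equation*}
as the one-step arithmetic recurrence $\frac{1}{\tau^k}=\frac{1}{\tau^{k-1}}+1$. This telescopes immediately: starting from the initial index $k=-1$ and stepping up $k+1$ times, I obtain $\frac{1}{\tau^k}=\frac{1}{\tau^{-1}}+(k+1)$, which, upon inversion, gives the claimed closed form.

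For rigor I would present the telescoping as a short finite induction on $k\ge -1$: the base case $k=-1$ is trivial, and the induction step is exactly the reformulated recurrence above. Positivity of the $\tau^k$ (needed so that the reciprocals are well defined and the sequence does not cross zero) follows from $\tau^{-1}\in(0,1)$ and the fact that $1/\tau^k=1/\tau^{-1}+(k+1)>0$ for every $k\ge -1$, so no division-by-zero issue arises along the way.

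I do not anticipate any real obstacle; the statement is essentially a bookkeeping lemma and the authors themselves note that the verification is trivial. The only subtlety worth flagging is notational: the derivation yields $\tau^k=1/\bigl(1/\tau^{-1}+k+1\bigr)$, so the symbol ``$\tau^{-1}$'' appearing in the denominator of the lemma's closed form must be read as the reciprocal of the initial term of the sequence (consistent with $\tau^{-1}\in(0,1)$ being the seed and with the $k=-1$ sanity check $\tau^{-1}=\tau^{-1}$). I would therefore state the closed form in the explicit reciprocal form above to avoid any ambiguity, and then conclude the proof in one line.
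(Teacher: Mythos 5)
Your telescoping of the reciprocals, $1/\tau^k=1/\tau^{k-1}+1$, is correct and is exactly the routine verification the authors have in mind—the paper explicitly omits the proof as ``trivial to verify,'' so there is no alternative argument to compare against. Your notational flag is also on point: the recurrence \eqref{G13} yields $\tau^k=1/\bigl(1/\tau^{-1}+k+1\bigr)$, so the symbol $\tau^{-1}$ in the stated denominator must be read as the reciprocal of the seed value (a harmless slip in the paper's statement that does not affect the $\Theta(1/k)$ decay used in the subsequent rate theorems).
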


\subsection{$O(1/t)$ non-ergodic convergence rate}
We establish $O(1/t)$ non-ergodic convergence rate of the primal dual gap  for the faster prediction-correction framework \eqref{G27}-\eqref{G28} under the conditions \eqref{V5}-\eqref{V6}.
\begin{lemma} \label{L1}
	For the faster prediction-correction framework \eqref{G27}-\eqref{G28} under \eqref{V5}-\eqref{V6}, we have
	\begin{equation}\label{V36}
	\begin{aligned}
	&\frac{1}{\tau^k}[\theta(u)-\theta(\breve{u}^k)]-\frac{1}{\tau^{k-1}}[\theta(u)-\theta(\breve{u}^{k-1})]+(w-\widetilde{w}^k)^TF(\widetilde{w}^k)
	\\\geq&\frac{1}{2}\left(\|v^{k+1}-v\|^2_{H}-\|v^k-v\|^2_{H}+\|v^k-\widetilde{v}^k\|^2_{G}\right)
	,~\forall w,
	\end{aligned}
	\end{equation}where $v=Lw$.
\end{lemma}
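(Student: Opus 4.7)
The plan is to split the proof into two conceptually separate pieces: first, an ``acceleration'' reduction that uses convexity of $\theta$ and the defining relation \eqref{G14}-\eqref{G13} to convert the one-step variational inequality \eqref{G15} into a telescoping inequality with weights $1/\tau^k$; and second, a standard He--Yuan style algebraic identity that turns the resulting quadratic form into a difference of $H$-squared norms plus a $G$-squared remainder.

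For the first piece, I would apply \eqref{G15} with the test point $w$ replaced by the convex combination $\tau^k w+(1-\tau^k)\breve w^{k-1}$. Since $u$ is the $x$-part of $w$ and $\theta$ is convex, the term $\theta(\tau^k u+(1-\tau^k)\breve u^{k-1})$ on the left can be upper-bounded by $\tau^k\theta(u)+(1-\tau^k)\theta(\breve u^{k-1})$. The key simplification is that the rearrangement of \eqref{G14} gives $\breve w^k=\tau^k\widetilde w^k+(1-\tau^k)\breve w^{k-1}$, so
\[
\tau^k w+(1-\tau^k)\breve w^{k-1}-\breve w^k=\tau^k(w-\widetilde w^k),
\]
which cleanly collapses both sides of the VI to something proportional to $\tau^k(w-\widetilde w^k)$. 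Dividing through by $\tau^k>0$ and using \eqref{G13} in the form $(1-\tau^k)/\tau^k=1/\tau^{k-1}$ and $1/\tau^k-1/\tau^{k-1}=1$ turns the left-hand side exactly into
\[
\tfrac{1}{\tau^k}[\theta(u)-\theta(\breve u^k)]-\tfrac{1}{\tau^{k-1}}[\theta(u)-\theta(\breve u^{k-1})]+(w-\widetilde w^k)^TF(\widetilde w^k),
\]
while the right-hand side becomes $(w-\widetilde w^k)^TL^TQL(w^k-\widetilde w^k)$.

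For the second piece, I would lower-bound $(w-\widetilde w^k)^TL^TQL(w^k-\widetilde w^k)=(v-\widetilde v^k)^TQ(v^k-\widetilde v^k)$ by the desired $H$/$G$ expression. Using \eqref{G28} in the form $v^k-\widetilde v^k=M^{-1}(v^k-v^{k+1})$ gives $(v-\widetilde v^k)^TH(v^k-v^{k+1})$, since $H=QM^{-1}$ from \eqref{V5}. Splitting $v-\widetilde v^k=(v-v^k)+(v^k-\widetilde v^k)$ and applying the polarization identity $2a^THb=\|a+b\|_H^2-\|a\|_H^2-\|b\|_H^2$ to the first cross-term, while using the symmetry of $H$ to rewrite $HM+M^TH=Q+Q^T$ in the second term, assembles everything into
\[
2(v-\widetilde v^k)^TQ(v^k-\widetilde v^k)=\|v^{k+1}-v\|_H^2-\|v^k-v\|_H^2+(v^k-\widetilde v^k)^T\bigl[Q+Q^T-M^THM\bigr](v^k-\widetilde v^k),
\]
and the last bracket is exactly $G$ by \eqref{V6}. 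Combining the two pieces yields \eqref{V36}.

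The main obstacle is getting the acceleration reduction (the first piece) to produce the prescribed telescoped form; it hinges on the exact arithmetic between \eqref{G14} and \eqref{G13}, and on the fact that convexity is applied to the $\theta$-term only (so the linear $F$-term survives with the right coefficient). The second piece is essentially the identity underlying Theorem \ref{T1}/\ref{T2} of the He--Yuan framework and requires only routine manipulation once \eqref{V5}-\eqref{V6} are invoked.
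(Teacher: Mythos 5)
Your proposal is correct and follows essentially the same route as the paper: the second half is exactly the paper's identity \eqref{G3}--\eqref{G2}, using $v^k-v^{k+1}=M(v^k-\widetilde v^k)$, $H=QM^{-1}$ from \eqref{V5} and $G=Q^T+Q-M^THM$ from \eqref{V6}. The only (equivalent) variation is in the telescoping step: the paper adds $(1-\tau^k)/\tau^k$ times \eqref{G15} evaluated at $(\breve u^{k-1},\breve w^{k-1})$ to \eqref{G15} at a general $w$, whereas you test \eqref{G15} at the single point $\tau^k w+(1-\tau^k)\breve w^{k-1}$ and invoke convexity of $\theta$; via \eqref{G14} and \eqref{G13} both manipulations produce the same intermediate inequality \eqref{G1}.
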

\begin{proof}
	Prediction step \eqref{G27} means prediction step \eqref{G15} holds. 
	Multiplying both sides of \eqref{G15} by $(1-\tau^k)/\tau^k$ with $u=\breve{u}^{k-1}$ and $w=\breve{w}^{k-1}$,  and then adding it to \eqref{G15} yields that
	\begin{equation}\label{G1}
	\begin{aligned}
	&\frac{1}{\tau^k}[\theta(u)-\theta(\breve{u}^k)]-\frac{1-\tau^k}{\tau^{k}}[\theta(u)-\theta(\breve{u}^{k-1})]+(w-\widetilde{w}^k)^TF(\widetilde{w}^k)\\\overset{\eqref{G13}}{=}
	&\frac{1}{\tau^k}[\theta(u)-\theta(\breve{u}^k)]-\frac{1}{\tau^{k-1}}[\theta(u)-\theta(\breve{u}^{k-1})]+(w-\widetilde{w}^k)^TF(\widetilde{w}^k)
	\\\geq&(w-\widetilde{w}^k)^TL^TQL(w^k-\widetilde{w}^k)
	=(v-\widetilde{v}^k)^TQ(v^k-\widetilde{v}^k).
	\end{aligned}
	\end{equation}
	On the other hand, we can verify that
	\begin{eqnarray}
	\begin{aligned}
	&(v-\widetilde{v}^k)^TQ(v^k-\widetilde{v}^k)\overset{\eqref{G28}}{=}(v-\widetilde{v}^k)^TH(v^k-{v}^{k+1})\label{G3}\\=&\frac{1}{2}\left(\|v^{k+1}-v\|^2_H-\|v^{k}-v\|^2_H+\|v^k-\widetilde{v}^k\|_H^2-	\|v^{k+1}-\widetilde{v}^k\|_H^2\right)
	\end{aligned}
	\end{eqnarray}and 
	\begin{eqnarray}
	&&	\|v^k-\widetilde{v}^k\|_H^2-	\|v^{k+1}-\widetilde{v}^k\|_H^2\nonumber\\&=&	\|v^k-\widetilde{v}^k\|_H^2-	\|(v^{k}-\widetilde{v}^k)-(v^k-{v}^{k+1})\|_H^2\nonumber\\&\overset{\eqref{G28}}{=}&\|v^k-\widetilde{v}^k\|_H^2-	\|(v^{k}-\widetilde{v}^k)-M(v^k-\widetilde{v}^{k})\|_H^2\label{G2}\\&=&(v^k-\widetilde{v}^k)^T(2HM-M^THM)(v^k-\widetilde{v}^k)\nonumber\\&=&(v^k-\widetilde{v}^k)^T(Q^T+Q-M^THM)(v^k-\widetilde{v}^k)\nonumber\\&=&\|v^k-\widetilde{v}^k\|_G^2. \nonumber\end{eqnarray}
	Combining \eqref{G1}, \eqref{G3} and \eqref{G2}  completes the proof.
\end{proof}

\begin{lemma} \label{L2}
	For the faster prediction-correction framework \eqref{G27}-\eqref{G28} under \eqref{V5}-\eqref{V6}, we have
	\begin{equation}\label{G43}
	\begin{aligned}
	&\frac{1}{\tau^k}[\theta(u)-\theta(\breve{u}^k)+(w-\breve{w}^k)^TF({w})]\\&-\frac{1}{\tau^{k-1}}[\theta(u)-\theta(\breve{u}^{k-1})+(w-\breve{w}^{k-1})^TF({w})]
	\\\geq&\frac{1}{2}\left(\|v^{k+1}-v\|^2_{H}-\|v^k-v\|^2_{H}+\|v^k-\widetilde{v}^k\|^2_{G}\right)
	,~\forall w,
	\end{aligned}
	\end{equation}where $v=Lw$.
\end{lemma}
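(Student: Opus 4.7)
The plan is to deduce Lemma~\ref{L2} (i.e.\ \eqref{G43}) directly from Lemma~\ref{L1} (i.e.\ \eqref{V36}) by rewriting the cross term $(w-\widetilde w^k)^T F(\widetilde w^k)$ as a telescoping difference in $k$. Two structural facts drive the argument: the extrapolation formula \eqref{G14} combined with the recursion \eqref{G13}, and the skew-symmetry of the linear part of $F$ in both Examples \ref{E1} and \ref{E2}.

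First, from \eqref{G13} I observe the key identity $\tfrac{1}{\tau^k}-\tfrac{1}{\tau^{k-1}}=1$, since $\tfrac{1}{\tau^{k-1}}=\tfrac{1-\tau^k}{\tau^k}$. Plugging the definition $\widetilde w^k=\tfrac{1}{\tau^k}\breve w^{k}-\tfrac{1-\tau^k}{\tau^k}\breve w^{k-1}$ into $w-\widetilde w^k$ and using this identity, I would rewrite
\begin{equation*}
w-\widetilde w^k \;=\; \tfrac{1}{\tau^k}(w-\breve w^k) \;-\; \tfrac{1}{\tau^{k-1}}(w-\breve w^{k-1}).
\end{equation*}
This is the telescoping decomposition that turns a single inner product at step $k$ into a difference of inner products at steps $k$ and $k-1$.

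Second, I would exploit the fact that $F$ in both \eqref{P3'} and \eqref{P2'} is affine with skew-symmetric linear part (writing $F(w)=Jw-c$ with $J^\top=-J$), so $(w-\widetilde w^k)^T\bigl(F(w)-F(\widetilde w^k)\bigr)=(w-\widetilde w^k)^T J(w-\widetilde w^k)=0$. Hence
\begin{equation*}
(w-\widetilde w^k)^T F(\widetilde w^k)\;=\;(w-\widetilde w^k)^T F(w).
\end{equation*}
Combining this equality with the telescoping decomposition above yields
\begin{equation*}
(w-\widetilde w^k)^T F(\widetilde w^k)\;=\;\tfrac{1}{\tau^k}(w-\breve w^k)^T F(w)-\tfrac{1}{\tau^{k-1}}(w-\breve w^{k-1})^T F(w).
\end{equation*}

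Finally, substituting this identity into \eqref{V36} of Lemma~\ref{L1} regroups the left-hand side exactly into the telescoping form on the left of \eqref{G43}, while leaving the right-hand side unchanged. The main (and essentially only) subtlety is recognizing the skew-symmetry of $F$'s linear part, which one checks directly from the block definitions of $F$ in Examples~\ref{E1} and~\ref{E2}; after that, the proof is a one-line algebraic rearrangement, so no additional estimate is required.
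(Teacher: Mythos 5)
Your proposal is correct and follows essentially the same route as the paper: both exploit the identity $(w-w')^T(F(w)-F(w'))=0$ (skew-symmetry of the linear part of $F$) to replace $(w-\widetilde w^k)^TF(\widetilde w^k)$ by $(w-\widetilde w^k)^TF(w)$, then use \eqref{G14} with \eqref{G13} to split it as $\tfrac{1}{\tau^k}(w-\breve w^k)^TF(w)-\tfrac{1}{\tau^{k-1}}(w-\breve w^{k-1})^TF(w)$ and substitute into Lemma~\ref{L1}. No gaps.
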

\begin{proof}
	Based on the definition of $F(w)$  in Example \ref{E1} and \ref{E2}, we have
	\begin{equation*}
	(w-w')^T(F(w)-F(w'))=0,~\forall w,w'.
	\end{equation*}
	Then  we obtain
	\begin{eqnarray}\label{G19}
	&&(w-\widetilde{w}^k)^TF(\widetilde{w}^k)=(w-\widetilde{w}^k)^TF({w})\nonumber\\&=&
	\frac{1}{\tau^k}(w-\breve{w}^k)^TF({w})-\frac{1-\tau^k}{\tau^k}
	(w-\breve{w}^{k-1})^TF({w})\\&\overset{\eqref{G13}}{=}&
	\frac{1}{\tau^k}(w-\breve{w}^k)^TF({w})-\frac{1}{\tau^{k-1}}
	(w-\breve{w}^{k-1})^TF({w}).\nonumber
	\end{eqnarray}
	Combining  Lemma \ref{L1} we obtain the conclusion.
\end{proof}

Then we obtain the following conclusion immediately. 
\begin{theorem}[$O(1/t)$ non-ergodic convergence rate]\label{TNE}
	Let $\{\breve{w}^t\}$ be generated by \eqref{G27}-\eqref{G28} under \eqref{V5}-\eqref{V6}. Then  we have
	\begin{equation*}
	\begin{aligned}
	&\theta(\breve{u}^{t})-\theta(u)-	(w-\breve{w}^t)^TF({w})\\\leq& 
\frac{1}{\tau^{-1}+t+1}\left\{
\frac{1}{\tau^0}	[\theta(\breve{u}^{0})-\theta(u)-	(w-\breve{w}^0)^TF({w})]+\frac{1}{2}\|v^{1}-v\|^2_{H}
\right\},~t=1,2,\dots,~\forall w,
	\end{aligned}
	\end{equation*}where $v=Lw$.
\end{theorem}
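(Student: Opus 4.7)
The plan is to derive the bound by summing the per-iteration inequality \eqref{G43} from Lemma \ref{L2} over $k=1,2,\dots,t$ so that the weighted primal-dual gaps telescope, and then to discard the non-negative terms using the convergence conditions \eqref{V5}-\eqref{V6}. For brevity set $\Delta^k := \theta(u)-\theta(\breve{u}^k)+(w-\breve{w}^k)^TF(w)$.

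First I would add the $t$ copies of \eqref{G43}. The left-hand side collapses by telescoping to $\frac{1}{\tau^t}\Delta^t - \frac{1}{\tau^0}\Delta^0$, while the right-hand side becomes
$$
\frac{1}{2}\left(\|v^{t+1}-v\|_H^2-\|v^1-v\|_H^2\right)+\frac{1}{2}\sum_{k=1}^t\|v^k-\widetilde{v}^k\|_G^2.
$$
Since $H\succ 0$ and $G\succ 0$ by \eqref{V5}-\eqref{V6}, both $\|v^{t+1}-v\|_H^2$ and the $G$-weighted sum are non-negative and can be dropped, giving
$$
\frac{1}{\tau^t}\Delta^t-\frac{1}{\tau^0}\Delta^0\ \geq\ -\frac{1}{2}\|v^1-v\|_H^2.
$$

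Next I would negate both sides and multiply by the positive factor $\tau^t$. Since $-\Delta^k=\theta(\breve{u}^k)-\theta(u)-(w-\breve{w}^k)^TF(w)$, this yields
$$
\theta(\breve{u}^t)-\theta(u)-(w-\breve{w}^t)^TF(w)\ \leq\ \tau^t\left\{\frac{1}{\tau^0}[\theta(\breve{u}^0)-\theta(u)-(w-\breve{w}^0)^TF(w)]+\frac{1}{2}\|v^1-v\|_H^2\right\}.
$$
Finally, invoking Lemma \ref{L5} to substitute $\tau^t=1/(\tau^{-1}+t+1)$ produces exactly the stated $O(1/t)$ non-ergodic bound.

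The proof is essentially bookkeeping, because Lemma \ref{L2} already packages the analytic content into a per-iteration inequality with precisely the $1/\tau^k$-weighting needed for clean telescoping; Lemma \ref{L5} then supplies the explicit rate. I do not expect a genuine obstacle. The only items requiring care are (i) confirming the telescoping is indexed starting at $k=1$, which is consistent with the appearance of $\breve{w}^0$ and $v^1$ on the right-hand side of the target inequality, (ii) keeping the sign flip correct when converting between $\Delta^k$ and the primal-dual gap $-\Delta^k$, and (iii) verifying that discarding $\|v^{t+1}-v\|_H^2$ and $\|v^k-\widetilde{v}^k\|_G^2$ is legitimate, which follows immediately from $H,G\succ 0$ in \eqref{V5}-\eqref{V6}.
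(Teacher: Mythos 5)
Your proposal is correct and follows exactly the paper's own argument: sum Lemma \ref{L2} from $k=1$ to $t$ so the weighted gaps and the $H$-norm terms telescope, drop the non-negative $\|v^{t+1}-v\|_H^2$ and $G$-norm terms (legitimate by \eqref{V5}-\eqref{V6}), flip the sign, and apply Lemma \ref{L5} to replace $\tau^t$ by $1/(\tau^{-1}+t+1)$. No gaps; the sign handling and the starting index $k=1$ match the paper's proof.
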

\begin{proof}
	Adding Lemma \ref{L2} from $k=1$ to $k=t$, then  
		\begin{equation*}
		\begin{aligned}
	&\frac{1}{\tau^t}	[\theta(\breve{u}^{t})-\theta(u)-	(w-\breve{w}^t)^TF({w})]+\frac{1}{2}\|v^{t+1}-v\|^2_{H}
	\\	\leq&
	\frac{1}{\tau^0}	[\theta(\breve{u}^{0})-\theta(u)-	(w-\breve{w}^0)^TF({w})]+\frac{1}{2}\|v^{1}-v\|^2_{H},~t=1,2,\dots,~\forall w.
		\end{aligned}
		\end{equation*}
Based on Lemma \ref{L5}, we obtain the conclusion.	
\end{proof}

\subsection{$O(1/t^2)$  convergence rate in the pointwise sense}
We establish $O(1/t^2)$  convergence rate in  pointwise sense for the faster prediction-correction framework \eqref{G27}-\eqref{G28} under the conditions \eqref{V5}-\eqref{V6}.

\begin{theorem}[$O(1/t^2)$  in the pointwise sense]\label{T3}
	For the faster prediction-correction framework \eqref{G27}-\eqref{G28} under \eqref{V5}-\eqref{V6}, we have\begin{equation*}
	\|M(\breve{v}^{t}-\breve{v}^{t-1})\|^2_H\leq O(1/t^2),~t=1,2,\dots.
	\end{equation*}
\end{theorem}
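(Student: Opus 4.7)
The plan is to exploit an algebraic identity that already carries a factor of $\tau^t = O(1/t)$. Rewriting the defining relation \eqref{G14} as $\breve w^k = \tau^k \widetilde w^k + (1-\tau^k)\breve w^{k-1}$, subtracting $\breve w^{k-1}$, and applying $L$ yields
\begin{equation*}
\breve v^t - \breve v^{t-1} = \tau^t(\widetilde v^t - \breve v^{t-1}),
\end{equation*}
so
\begin{equation*}
\|M(\breve v^t - \breve v^{t-1})\|_H^2 = (\tau^t)^2 \, \|M(\widetilde v^t - \breve v^{t-1})\|_H^2.
\end{equation*}
Since Lemma \ref{L5} gives $\tau^t = 1/(\tau^{-1}+t+1) = O(1/t)$, the claim will reduce to exhibiting a uniform-in-$t$ bound on $\|M(\widetilde v^t - \breve v^{t-1})\|_H$, which in turn will follow once the sequences $\{v^t\}$, $\{\widetilde v^t\}$, and $\{\breve v^t\}$ are shown to be bounded.

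To secure those iterate bounds, I would invoke Lemma \ref{L2} at an optimal primal-dual point $w^* = (u^*, \lambda^*)$, whose existence is guaranteed by the standing nonemptiness assumption. The quantity
\begin{equation*}
\gamma^k := \theta(\breve u^k) - \theta(u^*) - (w^* - \breve w^k)^T F(w^*)
\end{equation*}
is the standard primal-dual gap and is nonnegative: from $0 \in T(w^*)$ we have $-F(w^*) \in \partial\theta(u^*) \times \{0\}$, and convexity of $\theta$ together with the skew structure of $F$ yields $\gamma^k \ge 0$. Telescoping Lemma \ref{L2} from $k=1$ to $k=t$ at $u = u^*$, $w = w^*$ would produce
\begin{equation*}
\frac{\gamma^t}{\tau^t} + \frac{1}{2}\|v^{t+1} - v^*\|_H^2 + \frac{1}{2}\sum_{k=1}^{t}\|v^k - \widetilde v^k\|_G^2 \leq \frac{\gamma^0}{\tau^0} + \frac{1}{2}\|v^1 - v^*\|_H^2 =: C_0,
\end{equation*}
where every term on the left is nonnegative thanks to \eqref{V5}-\eqref{V6}. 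Hence $\{v^k\}$ is bounded in the $H$-norm, and the $G$-norm summability of $\{v^k - \widetilde v^k\}$ together with $G \succ 0$ gives boundedness of $\{\widetilde v^k\}$. Finally, the convex combination $\breve v^k = \tau^k \widetilde v^k + (1-\tau^k)\breve v^{k-1}$ with $\tau^k \in (0,1)$ inductively transfers the bound from $\{\widetilde v^k\}$ (and the initial datum $\breve v^{-1}$) to $\{\breve v^k\}$.

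Combining the two steps, $\|M(\widetilde v^t - \breve v^{t-1})\|_H$ is then bounded by some constant $C$ independent of $t$, and the identity from the first paragraph delivers $\|M(\breve v^t - \breve v^{t-1})\|_H^2 \leq C(\tau^t)^2 = O(1/t^2)$. The main obstacle I anticipate is the clean derivation of the telescoped a priori bound displayed above: one must carefully verify nonnegativity of $\gamma^k$ from the structure of $F$, $T$ and the subdifferential at $w^*$, and then discard the nonnegative $\gamma^t/\tau^t$ term to isolate clean bounds on $\|v^{t+1} - v^*\|_H^2$ and $\sum_k \|v^k - \widetilde v^k\|_G^2$. Once that uniform-boundedness step is in place, the conclusion follows immediately from the convex-combination decomposition of $\breve v^k$ and the $O(1/t)$ decay of $\tau^t$.
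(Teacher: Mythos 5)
Your argument is correct, and it reaches the conclusion by a genuinely different and more elementary route than the paper. The key simplification is your identity $\breve v^t-\breve v^{t-1}=\tau^t(\widetilde v^t-\breve v^{t-1})$, which follows immediately from \eqref{G14} and puts the factor $(\tau^t)^2$ in place from the start; after that you only need uniform boundedness of $\{v^k\}$, $\{\widetilde v^k\}$, $\{\breve v^k\}$, which you obtain by telescoping Lemma \ref{L2} at a saddle point $w^*$ (the same telescoping that underlies Theorem \ref{TNE}), using $S^k\ge 0$, $H\succ0$, $G\succ0$ from \eqref{V5}--\eqref{V6}, and the convex-combination structure $\breve v^k=\tau^k\widetilde v^k+(1-\tau^k)\breve v^{k-1}$ with $\tau^k\in(0,1)$ from Lemma \ref{L5}. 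The paper instead expands $\|v^{k+1}-v^*\|_H^2$ through the correction step \eqref{G28} into the quantities it calls $A^k,B^k,C^k,D^k,E^k$, telescopes to its bound \eqref{G7}, separately shows $D^t$ is bounded, and then needs the Cauchy--Schwarz step \eqref{G8} and the quadratic inequality \eqref{G78} in $\sqrt{E^t}/\tau^t$ to extract $E^t=O((\tau^t)^2)$; note that it, too, must invoke boundedness of $v^k$ and $\widetilde v^k$ (to control $C^t$), so your route assumes nothing extra while avoiding the $A$--$E$ bookkeeping and the quadratic-inequality argument entirely. The only points to state carefully in a full write-up are the ones you already flag: nonnegativity of $\gamma^k=S^k$ (using $Ax^*=b$ so the $\lambda$-component of $(\breve w^k-w^*)^TF(w^*)$ vanishes, plus $A_i^T\lambda^*\in\partial f_i(x_i^*)$), and the passage from summability of $\|v^k-\widetilde v^k\|_G^2$ with $G\succ0$ to boundedness of $\{\widetilde v^k\}$; both are routine, so your proof stands as a valid and shorter alternative to the paper's.
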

\begin{proof}Suppose $w^*$ is a saddle point of \eqref{P3} or \eqref{P2} and $v^*=Lw^*$.
	Based on the correction step \eqref{G28}, we have
	\begin{equation}\label{G4}
	\begin{aligned}
	\|v^{k+1}-v^*\|^2_{H}=&\|(I-M)(v^k-v^*)+M(\widetilde{v}^k-v^*)\|^2_H\\=&\underbrace{\|(I-M)(v^k-v^*)\|^2_H}_{:=A^k}+\underbrace{\|M(\widetilde{v}^k-v^*)\|^2_H}_{:=B^k}\\&+2\underbrace{(v^k-v^*)^T(I-M)^THM(\widetilde{v}^k-v^*)}_{:=C^k}.
	\end{aligned}
	\end{equation}
	According to the definition of $\widetilde{v}^k$, we obtain
	\begin{equation}\label{G5}
	\begin{aligned}
	&\|M(\widetilde{v}^{k}-v^*)\|^2_H+\frac{1-\tau^k}{\tau^k}\underbrace{\|M(\breve{v}^{k-1}-v^*)\|^2_H}_{:=D^{k-1}}
	\\=&\frac{1-\tau^k}{(\tau^k)^2}\underbrace{\|M(\breve{v}^{k}-\breve{v}^{k-1})\|^2_H}_{:=E^k}+\frac{1}{\tau^k}\underbrace{\|M(\breve{v}^{k}-v^*)\|^2_H}_{D^k}.
	\end{aligned}
	\end{equation}
	Setting  $u=u^*$, $w=w^*$ and $v=v^*$ in Lemma \ref{L2},  it follows from \eqref{G4} and \eqref{G5} that
	\begin{small}
	\begin{equation}\label{G6}
	\begin{aligned}
	&\|v^{k+1}-v^*\|^2_{H}-\|v^k-v^*\|^2_{H}=A^k-A^{k-1}+B^k-B^{k-1}+2(C^k-C^{k-1})\\=&A^k-A^{k-1}+2(C^k-C^{k-1})+
	\frac{1-\tau^k}{(\tau^k)^2}E^k-\frac{1-\tau^{k-1}}{(\tau^{k-1})^2}E^{k-1}\\&+\frac{1}{\tau^k}(D^k-D^{k-1})-\frac{1}{\tau^{k-1}}(D^{k-1}-D^{k-2})+D^{k-1}-D^{k-2}\\\leq&
	-\frac{2}{\tau^t}S^t+\frac{2}{\tau^{k-1}}S^{k-1},	
	\end{aligned}
	\end{equation}
	\end{small}
	where
	$S^t=	\theta(\breve{u}^{t})-\theta(u^*)+	(\breve{w}^t-w^*)^TF({w^*})\geq0.$ For each $t=1,2,\dots,$
	summing up both sides of \eqref{G6} from $k=1$ to $t$ yields that
	\begin{eqnarray}
	&&\sum_{k=1}^{t}\left(
	\|v^{k+1}-v^*\|^2_{H}-\|v^k-v^*\|^2_{H}
	\right)\nonumber\\&=&A^t-A^0+2(C^t-C^{0})+
	\frac{1-\tau^t}{(\tau^t)^2}E^t-\frac{1-\tau^{0}}{(\tau^{0})^2}E^{0}\nonumber\\&&+\frac{1}{\tau^t}(D^t-D^{t-1})-\frac{1}{\tau^{0}}(D^{0}-D^{-1})+D^{t-1}-D^{-1}\label{G21}\\&\overset{}{\leq}&-\frac{2}{\tau^t}S^t+\frac{2}{\tau^0}S^0,\nonumber
	\end{eqnarray}
	According to \eqref{G6}, $v^k$ is bounded. Therefore, the correction step \eqref{G28} implies that $\widetilde{v}^k$ is also bounded. Then $C^t$ is bounded by its definition. It follows from \eqref{G21} that there is a positive bound $N_1<\infty$ such that
	\begin{equation}\label{G7}
	\frac{2}{\tau^t}S^t+A^t+	\frac{1-\tau^t}{(\tau^t)^2}E^t+\frac{1}{\tau^t}(D^t-D^{t-1})+
	D^{t-1}<N_1.
	\end{equation}
	Since $S^t,~A^t$, $N^t$ and $E^t$ are all nonnegative, it  implies from \eqref{G7} that
	\begin{equation*}
	\frac{1}{\tau^t}(D^t-D^{t-1})+
	D^{t-1}\overset{\eqref{G13}}{=}\frac{1}{\tau^t}D^t-\frac{1}{\tau^{t-1}}D^{t-1}<N_1.
	\end{equation*}
	Then it holds that
	\begin{equation*}
	\frac{1}{\tau^t}	D^t<tN_1+\frac{1}{\tau^0}D^0.
	\end{equation*}
	Therefore, we have
	\begin{equation*}
	D^t\leq
	\tau^t tN_1+\frac{\tau^t}{\tau^0}D^0<\infty,~t\rightarrow\infty,
	\end{equation*}
	that is, $D^t$ is bounded, i.e., $D^t\leq N_2$ for some $0<N_2<\infty.$ According to Cauchy-Schwartz inequality, we have
	\begin{equation}\label{G8}
	\begin{aligned}
	D^{t-1}-D^{t}=&-(M(\breve{v}^{t}-v^*)+M(\breve{v}^{t-1}-v^*) )^THM(
	\breve{v}^{t}-\breve{v}^{t-1})\\\leq&
	\|M(\breve{v}^{t}-v^*)+M(\breve{v}^{t-1}-v^*)\|_H\|M(
	\breve{v}^{t}-\breve{v}^{t-1})\|_H\\\leq&
	2\sqrt{N_2}\sqrt{E^t}.
	\end{aligned}
	\end{equation}
	Combining 	\eqref{G7} and \eqref{G8} implies that
	\begin{equation}
	\frac{1-\tau^0}{(\tau^t)^2}E^t\leq
	\frac{1-\tau^t}{(\tau^t)^2}E^t\leq N_1+\frac{2}{\tau^t}\sqrt{N_2}\sqrt{E^t}.\label{G78}
	\end{equation}
	Let $h^t:=\sup_{t\geq0} \sqrt{E^t}/\tau^t$. It follows from \eqref{G78} that
	$$(1-\tau^0)h^t\leq\frac{N_1}{h^t}+2\sqrt{N_2}<\infty,$$
	i.e., $E^t\leq O((\tau^t)^2)$ for $t\rightarrow\infty$.  
	By Lemma \ref{L5}, we complete the proof.
\end{proof}

\section{Applications}\label{S4}


A simple way to design algorithms that satisfy our faster prediction-correction framework \eqref{G27}-\eqref{G28} is to convert algorithms satisfying \eqref{V3}-\eqref{V4} to faster versions that satisfy \eqref{G27}-\eqref{G28}.
In particular, we can construct faster algorithms to solve special cases of \eqref{P3} and \eqref{P2} based on the algorithms that satisfy \eqref{V3}-\eqref{V4}, such as ADMM, the linear ADMM \cite{yang2013linearized}, the strictly contractive Peaceman-Rachford splitting method \cite{he2014strictly,he2016convergence}, the Jacobian ALM \cite{doi:10.1137/130922793,xu2021parallel}, the ADMM with a substitution \cite{he2012alternating,he2017convergence}, ADMM-type algorithm \cite{he2021extensions}, CP-type algorithm \cite{doi:10.1137/21M1453463} with a larger step size and so on. In this section, we exemplify three such faster algorithms.

  \subsection{Faster algorithm satisfying \eqref{G27}-\eqref{G28} for solving \eqref{P3} with $m=2$ and equality constraints}\label{s4.3}
  We design a faster algorithm satisfying \eqref{G27}-\eqref{G28} for solving  \eqref{P3} $m=2$.

  We consider the following algorithm 
  for solving \eqref{G23}:
  \begin{framed}
  	\noindent{\bf[Prediction step.]} For given $P\succ 0$, $\beta>0$, $(\breve{x}^{k-1}_1,\breve{x}_2^{k-1},\breve{\lambda}^{k-1})$ and $({x}^{k}_1,{x}_2^{k},{\lambda}^{k})$,  find $(\breve{x}^{k}_1,\breve{x}_2^{k},\breve{\lambda}^{k})$ by
  	\begin{equation}\label{G29}
  	\begin{cases}
  	\breve{x}_1^k\in\arg\min\limits_{x_1}\left\{
  	f_1(x_1)-x_1^TA_1^T\lambda^k+\frac{\beta\tau^k}{2}\|A_1(\frac{1}{\tau^k}x_1-\frac{1-\tau^k}{\tau^k}\breve{x}_1^{k-1})+A_2x_2^k-b\|^2\right.\\\left.~~~~~~~~~~~~~~~~~+\frac{\tau^k}{2}\|\frac{1}{\tau^k}x_1-\frac{1-\tau^k}{\tau^k}\breve{x}_1^{k-1}- x_1^{k}\|^2_P
  	\right\},\\
  	\breve{x}_2^k\in\arg\min\limits_{x_2}\left\{
  	f_2(x_2)-x_2^TA_2^T[
  	\lambda^k-r\beta(A_1\widetilde{x}_1^{k}+A_2x_2^k-b)
  	]\right.\\\left.~~~~~~~~~~~~~~~~~+\frac{\beta\tau^k}{2}\|A_1\widetilde{x}_1^{k}+A_2(\frac{1}{\tau^k}x_2-\frac{1-\tau^k}{\tau^k}\breve{x}_2^{k-1})-b\|^2
  	\right\},\\
  	\breve{\lambda}^k=\arg\max\limits_{\lambda}\left\{-
  	\frac{\tau^k}{2}\|(\frac{1}{\tau^k}\lambda-\frac{1-\tau^k}{\tau^k}\breve{\lambda}^{k-1})-[\lambda^k-\beta(A_1\widetilde{x}_1^{k}+A_2x_2^k-b)]\|^2
  	\right\},
  	\end{cases}
  	\end{equation}where $\tau^k$ satisfy \eqref{G13}.
  	
  	\noindent{\bf [Correction step.]} Update $(x_1^{k+1}, x_2^{k+1},\lambda^{k+1})$ by
  	\begin{equation}\label{G30}
  	\begin{pmatrix}
  x_1^{k+1}\\	x_2^{k+1}\\\lambda^{k+1}
  	\end{pmatrix}=\begin{pmatrix}
  x_1^{k}	\\x_2^{k}\\\lambda^{k}
  	\end{pmatrix}-\begin{pmatrix}
  I_{n_1}&0&0\\0&	I_{n_2}&0\\
  0&	-s\beta A_2&(r+s)I_l
  	\end{pmatrix}\begin{pmatrix}
  x_1^{k}-\widetilde{x}_1^k\\	x_2^{k}-\widetilde{x}_2^k\\\lambda^{k}-\widetilde{\lambda}^k
  	\end{pmatrix},
  	\end{equation}where $$
  	\begin{pmatrix}
  	\widetilde x_1^k\\\widetilde x_2^k\\
  	\widetilde{\lambda}^k
  	\end{pmatrix}=\begin{pmatrix}
  	\frac{1}{\tau^k}\breve x_1^k-\frac{1-\tau^k}{\tau^k}\breve x_1^{k-1}\\ \frac{1}{\tau^k}\breve x_2^k-\frac{1-\tau^k}{\tau^k}\breve x_2^{k-1}\\
  	\frac{1}{\tau^k}\breve \lambda^k-\frac{1-\tau^k}{\tau^k}\breve \lambda^{k-1}
  	\end{pmatrix}.
  	$$
  \end{framed}
  
  The following theorem clarifies that Algorithm \eqref{G29}-\eqref{G30} satisfies \eqref{G27}-\eqref{G28} with convergence conditions  \eqref{V5}-\eqref{V6}. Hence  the rates of $O(1/t)$ in the non-ergodic sense of the primal-dual gap  and $O(1/t^2)$ in the pointwise sense can be obtained by Theorem \ref{TNE} and \ref{T3}. 
  \begin{theorem}
  	For $L,~Q,~M$ defined in \eqref{G25} and $\theta(u)$, $u$, $w$, $F(w),~T(w)$  defined in Example \ref{E1} with $m=2$, it holds that 
  	\begin{itemize}
  		\item[\textnormal{({1})}]    Algorithm \eqref{G29}-\eqref{G30} satisfies \eqref{G27}-\eqref{G28}.
  		\item[\textnormal{({2})}]   $H$ and $G$ satisfying \eqref{V5}-\eqref{V6} are positive definite if $A_2$ is  full  column rank,   \begin{equation*}
  		r\in(-1,1),~s\in(0,1)~{\rm and}~r+s>0.
  		\end{equation*}
  	\end{itemize}
  \end{theorem}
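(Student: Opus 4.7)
The strategy is to directly reduce this to the verification done in Theorem \ref{t2.1} for the non-accelerated algorithm \eqref{G24}, exploiting the fact that the matrices $L, Q, M$ are identical to those in \eqref{G25} and that the extra factor $\tau^k$ placed in front of each quadratic penalty in \eqref{G29} will cancel with the $1/\tau^k$ that appears from applying the chain rule to the argument $\frac{1}{\tau^k}x_i-\frac{1-\tau^k}{\tau^k}\breve{x}_i^{k-1}$. Thus, although the unknowns $\breve{x}_i^k$ are used to form a convex combination with the previous $\breve{x}_i^{k-1}$, the first-order conditions will still read as if they were written in terms of $\widetilde{w}^k$ in the same form as in the classical strictly contractive PRSM.

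For part (1), I would first dispose of the $\lambda$-subproblem: its optimality condition gives immediately $\widetilde{\lambda}^k=\lambda^k-\beta(A_1\widetilde{x}_1^k+A_2 x_2^k-b)$, which after rearrangement yields the third row of the prediction step, namely $0=A_1\widetilde{x}_1^k+A_2\widetilde{x}_2^k-b-A_2(\widetilde{x}_2^k-x_2^k)+\frac{1}{\beta}(\widetilde{\lambda}^k-\lambda^k)$. Next, for the $x_1$-subproblem I differentiate with respect to $x_1$, use $\nabla_{x_1}\bigl(\frac{1}{\tau^k}x_1-\frac{1-\tau^k}{\tau^k}\breve{x}_1^{k-1}\bigr)=\frac{1}{\tau^k}I$, and observe that the prefactor $\tau^k$ cancels, yielding $0\in\partial f_1(\breve{x}_1^k)-A_1^T\lambda^k+\beta A_1^T(A_1\widetilde{x}_1^k+A_2 x_2^k-b)+P(\widetilde{x}_1^k-x_1^k)$, and then substituting the $\widetilde{\lambda}^k$ relation to replace $-A_1^T\lambda^k+\beta A_1^T(A_1\widetilde{x}_1^k+A_2 x_2^k-b)$ by $-A_1^T\widetilde{\lambda}^k$. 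The $x_2$-subproblem is treated identically: after the $\tau^k$ cancellation, the relation coincides line-for-line with the $x_2$ optimality used in Theorem \ref{t2.1}, and the same substitutions give $0\in\partial f_2(\breve{x}_2^k)-A_2^T\widetilde{\lambda}^k+\beta A_2^TA_2(\widetilde{x}_2^k-x_2^k)-rA_2^T(\widetilde{\lambda}^k-\lambda^k)$. Stacking the three relations and recognizing the right-most term as $L^TQL(\widetilde{w}^k-w^k)$ with the $L,Q$ of \eqref{G25} and $L=I$, and noting that $(T-F)(\breve{w}^k)$ depends only on the subgradients of $f_1,f_2$ (hence on $\breve{x}_1^k,\breve{x}_2^k$), while the remaining $F(\widetilde{w}^k)$-piece is expressed in terms of $\widetilde{w}^k$, establishes \eqref{G27}. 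The correction step \eqref{G30} is written directly with the matrix $M$ from \eqref{G25}, so \eqref{G28} follows by inspection.

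For part (2), since the matrices $L$, $Q$, $M$ are identical to those already used in Theorem \ref{t2.1}, the positive definiteness of $H=QM^{-1}$ and $G=Q^T+Q-M^THM$ under $A_2$ of full column rank, $r\in(-1,1)$, $s\in(0,1)$, $r+s>0$ has already been verified; no new calculation is needed and we simply cite Theorem \ref{t2.1}\textnormal{(2)} (or \cite[Lemma 4.1]{he2016convergence}).

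The main obstacle, if any, is purely bookkeeping: one must be careful when translating the three optimality conditions into the correct block form, because the term $\lambda^k-r\beta(A_1\widetilde{x}_1^k+A_2 x_2^k-b)$ in the $x_2$-objective depends on $\widetilde{x}_1^k$ (already determined before $\breve{x}_2^k$ is computed), so the $r$-dependent cross term produces precisely the off-diagonal $-rA_2^T$ entry of $Q$. Once the $\tau^k$-cancellation is observed, the algebra is the same as in Theorem \ref{t2.1}, and the rates $O(1/t)$ non-ergodic and $O(1/t^2)$ pointwise then follow from Theorems \ref{TNE} and \ref{T3} without any further work.
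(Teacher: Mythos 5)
Your proposal is correct and follows essentially the same route as the paper: write the three optimality conditions (with the $\tau^k$ prefactor cancelling the $1/\tau^k$ chain-rule factor so that everything except the subgradients of $f_1,f_2$ is expressed through $\widetilde{w}^k$), substitute the $\widetilde{\lambda}^k$ relation, stack into the block form $L^TQL(\widetilde{w}^k-w^k)$ with $L,Q$ from \eqref{G25}, and verify the correction step by inspection, citing Theorem \ref{t2.1}(2) for positive definiteness. No substantive difference from the paper's own argument.
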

  \begin{proof}
  	Proof of (1).
  	The optimality condition of the $\lambda$-subproblem reads as:
  	\begin{equation}\label{G31}
  0=A_1\widetilde{x}_1^{k}+A_2\widetilde{x}_2^k-b-A_2(\widetilde{x}_2^k-x_2^k)+\frac{1}{\beta}(\widetilde{\lambda}^k-\lambda^k).
  	\end{equation}
  	or equivalently, \begin{equation}\label{G49}
  	\widetilde{\lambda}^k=\lambda^k-\beta(A_1\widetilde{x}_1^{k}+A_2x_2^k-b).
  	\end{equation}
  	The optimality condition of the $x_1$-subproblem is given by
  	\begin{equation}\label{G32}
  	\begin{aligned}
  	0&\in\partial f_1(\breve{x}_1^k)-A_1^T{[\lambda^k-\beta(A_1\widetilde{x}_1^{k}+A_2x_2^k-b)]}+P(\widetilde{x}_1^k-x_1^k)\\&=
  \partial	f_1(\breve{x}_1^k)-A_1^T{\widetilde{\lambda}^k}+P(\widetilde{x}_1^k-x_1^k)
  	.	
  	\end{aligned}
  	\end{equation}
  	The optimality condition of the $x_2$-subproblem reads as:
  	\begin{equation}\label{G33}
  	\begin{aligned}
  	0&\in\partial f_2(\breve{x}_2^k)
  	-A_2^T[
  	\lambda^k-r\beta(A_1\widetilde{x}_1^{k}+A_2x_2^k-b)
  	]+\beta A_2^T[ A_1\widetilde{x}_1^{k}+A_2\widetilde{x}_2^k-b]\\&=
  	\partial f_2(\breve{x}_2^k)-A_2^T\widetilde{\lambda}^k+\beta A_2^T A_2(\widetilde{x}_2^k-x_2^k)-rA_2^T(\widetilde{\lambda}^k-\lambda^k).
  	\end{aligned}
  	\end{equation}
 Combining \eqref{G32}, \eqref{G33} and \eqref{G31},
  	we have
  	\begin{equation*}
  	\begin{aligned}
  	&0\in \begin{pmatrix}
  	\partial f_1(\breve{x}_1^k)\\\partial f_2(\breve{x}_2^k)\\0
  	\end{pmatrix}+
  	\begin{pmatrix}
  	-A_1^T\widetilde{\lambda}^k\\-A_2^T\widetilde{\lambda}^k\\A_1\widetilde{x}_1^{k}+A_2\widetilde{x}_2^k-b
  	\end{pmatrix}
  	+\begin{pmatrix}
  	P(\widetilde{x}_1^k-x_1^k)\\
  	\beta A_2^T A_2(\widetilde{x}_2^k-x_2^k)-rA_2^T(\widetilde{\lambda}^k-\lambda^k) \\
  	-A_2(\widetilde{x}_2^k-x_2^k)+\frac{1}{\beta}(\widetilde{\lambda}^k-\lambda^k)
  	\end{pmatrix},
  	\end{aligned}
  	\end{equation*}
  	which is equivalent to$$
  	\begin{aligned}
  	&	0\in \begin{pmatrix}
  	\partial f_1(\breve{x}_1^k)\\\partial f_2(\breve{x}_2^k)\\0
  	\end{pmatrix}+
  	\begin{pmatrix}
  	-A_1^T\widetilde{\lambda}^k\\-A_2^T\widetilde{\lambda}^k\\A_1\widetilde{x}_1^{k}+A_2\widetilde{x}_2^k-b
  	\end{pmatrix}+{\begin{pmatrix}P&0&0\\
  	0&\beta A_2^TA_2&-rA_2^T\\
  	0&-A_2&\frac{1}{\beta}I_l
  	\end{pmatrix}}_{}{\begin{pmatrix}
  	x_1^k-\widetilde{x}_1^k\\
  x_2^k-\widetilde{x}_2^k\\
  \lambda^k-\widetilde{\lambda}^k
  \end{pmatrix}}.
  	\end{aligned}  	
  	$$
  	Then  we obtain the prediction step  \eqref{G29} satisfying \eqref{G27} with $L$ and $Q$ defined in \eqref{G25}.  The correction step is easy to verified. 
  	
  	\noindent Proof of (2). We can refer to Theorem \ref{t2.1} (2).
  \end{proof}
  \begin{remark}
  	According to the correction step \eqref{G30}, it holds that $$
  	x_1^{k+1}= \widetilde{x}_1^k,~~x_2^{k+1}=\widetilde{x}_2^k.
  	$$ and  \begin{eqnarray*} 
  	\lambda^{k+1}&=&\lambda^k+s\beta A_2(x_2^k-\widetilde{x}_2^k)-(r+s)(\lambda^k-\widetilde{\lambda}^k)\\&\overset{\eqref{G49}}{=}&\underbrace{\lambda^k-r\beta(A_1\widetilde{x}_1^k+A_2{x}_2^k-b)}_{:\lambda^{k+\frac{1}{2}}}-s\beta(A_1\widetilde{x}_1^k+A_2\widetilde{x}_2^k-b)\\&=&\lambda^{k+\frac{1}{2}}-s\beta(A_1\widetilde{x}_1^k+A_2\widetilde{x}_2^k-b)
  	\\&=&\lambda^{k+\frac{1}{2}}-s\beta(A_1{x}_1^{k+1}+A_2{x}_2^{k+1}-b)
  	\end{eqnarray*}
  	Then Algorithm \eqref{G29}-\eqref{G30} can be rewritten as:
  	\begin{equation*}
  		\begin{cases}
  		\breve{x}_1^k&\in\arg\min\limits_{x_1}\left\{
  		f_1(x_1)-x_1^TA_1^T\lambda^k\right.\\&\left.+\frac{\beta\tau^k}{2}\|A_1(\frac{1}{\tau^k}x_1-\frac{1-\tau^k}{\tau^k}\breve{x}_1^{k-1})+A_2x_2^k-b\|^2+\frac{\tau^k}{2}\|\frac{1}{\tau^k}x_1-\frac{1-\tau^k}{\tau^k}\breve{x}_1^{k-1}-x_1^k\|^2_P
  		\right\},\\
  			x_1^{k+1}&=\frac{1}{\tau^k}\breve x_1^k-\frac{1-\tau^k}{\tau^k}\breve x_1^{k-1},\\
  		\lambda^{k+\frac{1}{2}}&=\lambda^k-r\beta(A_1\widetilde{x}_1^k+A_2{x}_2^k-b),\\
  		\breve{x}_2^k&\in\arg\min\limits_{x_2}\left\{
  		f_2(x_2)-x_2^TA_2^T
  		\lambda^{k+\frac{1}{2}}
  		\right.\\&\left.+\frac{\beta\tau^k}{2}\|A_1\widetilde{x}_1^{k}+A_2(\frac{1}{\tau^k}x_2-\frac{1-\tau^k}{\tau^k}\breve{x}_2^{k-1})-b\|^2	\right\},\\
  			x_2^{k+1}&=\frac{1}{\tau^k}\breve x_2^k-\frac{1-\tau^k}{\tau^k}\breve x_2^{k-1},\\
  		\lambda^{k+1}&=\lambda^{k+\frac{1}{2}}-s\beta(A_1\widetilde{x}_1^k+A_2\widetilde{x}_2^k-b).
  	
  		\end{cases}
  	\end{equation*}
  	We can find that this algorithm update $\lambda$ twice in one iteration. Numerically,  updating $\lambda$ twice performance better than updating $\lambda$ once. This is the first paper that updates $\lambda$ twice with non-ergodic convergence. 
  \end{remark}
\subsection{Faster algorithm satisfying \eqref{G27}-\eqref{G28} for solving \eqref{P3}}\label{s4.1}
We design a faster algorithm satisfying \eqref{G27}-\eqref{G28} for solving  \eqref{P3}.


We consider the following algorithm  for solving \eqref{P3}:
\begin{framed}
	\noindent{\bf[Prediction step.]} With given $\beta>0$, $(\breve x_1^{k-1},\breve x_2^{k-1},\dots,\breve x_m^{k-1},\breve\lambda^{k-1})$ and $( x_1^{k}, x_2^{k},\dots, x_m^{k},\lambda^k)$, find $(\breve x_1^{k},\breve x_2^{k},\dots,\breve x_m^{k},\breve\lambda^k)$ by
	\begin{small}
		\begin{equation}\label{G34}
		\begin{cases}
		\breve{x}_1^k=\arg\min\limits_{x_1}\left\{
		f_1(x_1)-x_1^TA_1^T\lambda^k+\frac{\beta\tau^k}{2}\|A_1([\frac{1}{\tau^k}x_1-\frac{1-\tau^k}{\tau^k}\breve{x}_1^{k-1}]-x_1^k)\|^2
		\right\},\\
		\breve{x}_2^k=\arg\min\limits_{x_2}\left\{
		f_2(x_2)-x_2^TA_2^T
		\lambda^k\right.\\\left.~~~~~~~~~+\frac{\beta\tau^k}{2}\|A_1(\widetilde{x}_1^{k}-x_1^k)+A_2([\frac{1}{\tau^k}x_2-\frac{1-\tau^k}{\tau^k}\breve{x}_2^{k-1}]-x_2^k)\|^2
		\right\},\\
		~~~~~~~~~\vdots\\
		\breve{x}_i^k=\arg\min\limits_{x_i}\left\{
		f_i(x_i)-x_i^TA_2^T
		\lambda^k\right.\\\left.~~~~~~~~+\frac{\beta\tau^k}{2}\|\sum_{j=1}^{i-1}A_j(\widetilde{x}_j^{k}-x_j^k)+A_i([\frac{1}{\tau^k}x_i-\frac{1-\tau^k}{\tau^k}\breve{x}_i^{k-1}]-x_i^k)\|^2
		\right\},\\
		~~~~~~~~~\vdots\\
		\breve{x}_m^k=\arg \min\limits_{x_m}\left\{
		f_m(x_m)-x_m^TA_2^T
		\lambda^k\right.\\\left.~~~~~~~~+\frac{\beta\tau^k}{2}\|\sum_{j=1}^{m-1}A_j(\widetilde{x}_j^{k}-x_j^k)+A_m([\frac{1}{\tau^k}x_m-\frac{1-\tau^k}{\tau^k}\breve{x}_m^{k-1}]-x_m^k)\|^2
		\right\},\\
		\breve{\lambda}^k=\arg\max\limits_{\lambda}\left\{
		-\lambda^T(\sum_{j=1}^{m}A_j\widetilde{x}_j^k-b)-\frac{\tau^k}{2\beta}\|[\frac{1}{\tau^k}\lambda-\frac{1-\tau^k}{\tau^k}\breve{\lambda}^{k-1}]-\lambda^k\|^2
		\right\},
		\end{cases}
		\end{equation}
	\end{small}where $\tau^k$ satisfy \eqref{G13}.
	
	\noindent{\bf [Correction step.]} Update  $(A_1x_1^{k+1},A_2x_2^{k+1},\dots,A_mx_m^{k+1},\lambda^{k+1})$ by
	\begin{footnotesize}
		\begin{equation}\label{G35}
		\begin{pmatrix}
		\sqrt{\beta}A_1x_1^{k+1}\\	\sqrt{\beta}A_2x_2^{k+1}\\\vdots\\	\sqrt{\beta}A_mx_m^{k+1}\\\frac{1}{	\sqrt{\beta}}\lambda^{k+1}
		\end{pmatrix}=\begin{pmatrix}
		\sqrt{\beta}A_1x_1^{k}\\	\sqrt{\beta}A_2x_2^{k}\\\vdots\\	\sqrt{\beta}A_mx_m^{k}\\\frac{1}{	\sqrt{\beta}}\lambda^{k}
		\end{pmatrix}-\begin{pmatrix}
		\alpha I_l&-\alpha I_l&0&\dots&0\\
		0&\alpha I_l&\ddots&\ddots&\vdots\\
		\vdots&\ddots&\ddots&-\alpha I_l&0\\
		0&\dots&0&\alpha I_l&0\\
		-\alpha I_l&0&\dots&0&I_l
		\end{pmatrix}\begin{pmatrix}
		\sqrt{\beta}( 	A_1x_1^{k}-A_1\widetilde{x}_1^k)\\	\sqrt{\beta}(A_2x_2^{k}-A_2\widetilde{x}_2^k)\\\vdots\\	\sqrt{\beta}(A_mx_m^{k}-A_m\widetilde{x}_m^k)\\\frac{1}{	\sqrt{\beta}}(\lambda^{k}-\widetilde{\lambda}^k)
		\end{pmatrix},
		\end{equation}
	\end{footnotesize}where$$
	\begin{pmatrix}
	\widetilde x_1^k\\\vdots\\\widetilde x_m^k\\
	\widetilde{\lambda}^k
	\end{pmatrix}=\begin{pmatrix}
	\frac{1}{\tau^k}\breve x_1^k-\frac{1-\tau^k}{\tau^k}\breve x_1^{k-1}\\\vdots\\ \frac{1}{\tau^k}\breve x_m^k-\frac{1-\tau^k}{\tau^k}\breve x_m^{k-1}\\
	\frac{1}{\tau^k}\breve \lambda^k-\frac{1-\tau^k}{\tau^k}\breve \lambda^{k-1}
	\end{pmatrix}.
	$$
\end{framed}

The following theorem clarifies that Algorithm \eqref{G34}-\eqref{G35} satisfies \eqref{G27}-\eqref{G28} with convergence conditions  \eqref{V5}-\eqref{V6}. Hence the rates of $O(1/t)$ in the non-ergodic sense of the primal-dual gap  and $O(1/t^2)$ in the pointwise sense can be obtained by Theorem \ref{TNE} and \ref{T3}. 
\begin{theorem}
		For $L,~Q,~M$ defined in \eqref{G38} and $\theta(u)$, $u$, $w$, $F(w),~T(w)$  defined in Example \ref{E1}, it holds that 
		\begin{itemize}
			\item[\textnormal{({1})}]     Algorithm \eqref{G34}-\eqref{G35} satisfies \eqref{G27}-\eqref{G28}.
			\item[\textnormal{({2})}]   $H$ and $G$ satisfying \eqref{V5}-\eqref{V6} are positive definite if $\alpha\in(0,1)$.
		\end{itemize}
\end{theorem}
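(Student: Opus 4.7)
The plan is to mirror the proof of Theorem \ref{t2.2} while tracking how the extrapolation \eqref{G14} interacts with the quadratic penalties in \eqref{G34}. For part (1), I would first write the optimality condition of each $x_i$-subproblem. The key computational observation is that the gradient of the extrapolated penalty $\frac{\beta\tau^k}{2}\|A_i([\tfrac{1}{\tau^k}x_i-\tfrac{1-\tau^k}{\tau^k}\breve{x}_i^{k-1}]-x_i^k)+\text{fixed}\|^2$ with respect to $x_i$, evaluated at $x_i=\breve{x}_i^k$, reduces to $\beta A_i^T[\sum_{j<i}A_j(\widetilde{x}_j^k-x_j^k)+A_i(\widetilde{x}_i^k-x_i^k)]$ because the chain-rule factor $1/\tau^k$ cancels the outer $\tau^k$ and the identity $\widetilde{x}_i^k=\tfrac{1}{\tau^k}\breve{x}_i^k-\tfrac{1-\tau^k}{\tau^k}\breve{x}_i^{k-1}$ collapses the bracketed term. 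Thus the $x_i$-inclusion has exactly the same form as in the proof of Theorem \ref{t2.2}(1), except that the subgradient is evaluated at $\breve{x}_i^k$ rather than at $\widetilde{x}_i^k$; analogously, the $\lambda$-subproblem yields $A\widetilde{x}^k-b+\tfrac{1}{\beta}(\widetilde{\lambda}^k-\lambda^k)=0$.

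Next I would add and subtract $A_i^T\widetilde{\lambda}^k$ in each $x_i$-condition to expose the term $-A_i^T\widetilde{\lambda}^k$ (which combines with the feasibility residual to form $F(\widetilde{w}^k)$), stack the $m+1$ relations into vector form, and compare them with $L^TQL(\widetilde{w}^k-w^k)$ for the $L,Q$ defined in \eqref{G38}. A direct block-matrix multiplication reproduces precisely the correction terms appearing in the inclusion, so
\begin{equation*}
0\in(T-F)(\breve{w}^k)+F(\widetilde{w}^k)+L^TQL(\widetilde{w}^k-w^k),
\end{equation*}
which is \eqref{G27}. The correction step \eqref{G35} is, upon inspection, nothing but $v^{k+1}=v^k-M(v^k-\widetilde{v}^k)$ written out blockwise with $L$ and $M$ as in \eqref{G38}, hence \eqref{G28} holds and part (1) is established.

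For part (2), the matrices $L,Q,M$ coincide with those used for the non-accelerated algorithm of Section \ref{s2.3}. Since positive definiteness of $H=QM^{-1}$ and $G=Q^T+Q-M^THM$ depends only on these matrices and not on how the iterates are generated, the conclusion is the same as in Theorem \ref{t2.2}(2) and follows from \cite[Lemmas 7.1, 7.2]{he2021extensions} under $\alpha\in(0,1)$.

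The main obstacle is the first step: bookkeeping the interplay between $\breve{w}^k$ (appearing only inside the subdifferentials of $\theta$) and $\widetilde{w}^k$ (appearing in all linear/coupling terms) so that the inclusion can be repackaged in the form of \eqref{G27}. Once the identity $\frac{\partial}{\partial x_i}\bigl[\tfrac{\tau^k}{2}\|\tfrac{1}{\tau^k}x_i-\tfrac{1-\tau^k}{\tau^k}\breve{x}_i^{k-1}-y\|^2\bigr]\big|_{x_i=\breve{x}_i^k}=\widetilde{x}_i^k-y$ is recognized, the remainder of the argument is a line-by-line transcription of the proof of Theorem \ref{t2.2}.
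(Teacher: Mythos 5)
Your proposal is correct and follows essentially the same route as the paper: write the optimality conditions of the $x_i$- and $\lambda$-subproblems of \eqref{G34} (with the $\tau^k$-extrapolation collapsing to $\widetilde{x}_i^k$, so that only the subdifferential is evaluated at $\breve{x}_i^k$), add and subtract $A_i^T\widetilde{\lambda}^k$, stack the relations to identify the correction term with $L^TQL(\widetilde{w}^k-w^k)$ for $L,Q$ in \eqref{G38}, note that \eqref{G35} is exactly \eqref{G28}, and reduce part (2) to Theorem \ref{t2.2}(2), i.e.\ to \cite[Lemmas 7.1, 7.2]{he2021extensions}. Your explicit chain-rule justification of why the factors $\tau^k$ and $1/\tau^k$ cancel is a detail the paper leaves implicit, but the argument is the same.
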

\begin{proof} Proof of (1).
	For $i=1,2,\dots,m$, 	the optimality condition of the $x_i$-subproblem is given by
	\begin{equation}\label{G40}
	\begin{aligned}
0&\in\partial f_i(\breve x_i^k)
	-A_i^T\lambda^k+\beta A_i^T\sum_{j=1}^{i}A_j(\widetilde{x}_j^{k}-x_j^k)\\&=
	\partial f_i(\breve x_i^k)-A_i^T\widetilde{\lambda}^k+\beta A_i^T\sum_{j=1}^{i}A_j(\widetilde{x}_j^{k}-x_j^k)+A_i^T(\widetilde{\lambda}^k-\lambda^k)
	\end{aligned}
	\end{equation}
	The optimality condition of the $\lambda$-subproblem reads as:
	\begin{equation}\label{G39}
	0=(\sum_{j=1}^{m}A_j\widetilde{x}_j^k-b)+\frac{1}{\beta}(\widetilde{\lambda}^k-\lambda^k).
	\end{equation}
	Combining \eqref{G40} and \eqref{G39}, we obtain 
	$$
	\begin{small}\begin{aligned}
&		0\in \begin{pmatrix}
\partial f_1(\breve{x}_1^k)\\\partial f_2(\breve{x}_2^k)\\\vdots\\\partial f_m(\breve x_m)\\0
\end{pmatrix}+\begin{pmatrix}
-A_1^T\widetilde\lambda^k\\
-A_2^T\widetilde\lambda^k\\
\vdots\\
-A_m^T\widetilde\lambda^k\\
\sum_{j=1}^{m}A_j\widetilde{x}_j^k-b
	\end{pmatrix}+
\begin{pmatrix}
\beta A_1^TA_1(\widetilde{x}_1^{k}-x_1^k)+A_1^T(\widetilde{\lambda}^k-\lambda^k)\\
\beta A_2^T\sum_{j=1}^{2}A_j(\widetilde{x}_j^{k}-x_j^k)+A_2^T(\widetilde{\lambda}^k-\lambda^k)\\
\vdots\\
\beta A_m^T\sum_{j=1}^{m}A_j(\widetilde{x}_j^{k}-x_j^k)+A_m^T(\widetilde{\lambda}^k-\lambda^k)\\
\frac{1}{\beta}(	\widetilde{\lambda}^k-\lambda^k
)	\end{pmatrix},
	\end{aligned}
	\end{small}
	$$
	which is equivalent to:
	$$
	\begin{small}\begin{aligned}
	&		0\in \begin{pmatrix}
	\partial f_1(\breve{x}_1^k)\\\partial f_2(\breve{x}_2^k)\\\vdots\\\partial f_m(\breve x_m)\\0
	\end{pmatrix}+\begin{pmatrix}
	-A_1^T\widetilde\lambda^k\\
	-A_2^T\widetilde\lambda^k\\
	\vdots\\
	-A_m^T\widetilde\lambda^k\\
	\sum_{j=1}^{m}A_j\widetilde{x}_j^k-b
	\end{pmatrix}+L^T{\begin{pmatrix}
	I_l&0&\dots&0&I_l\\
	I_l&I_l&\ddots&\vdots&I_l\\
	\vdots&\vdots&\ddots&0&\vdots\\
	I_l&I_l&\dots&I_l&I_l\\
	0&0&\dots&0&I_l
	\end{pmatrix}} L\begin{pmatrix}
\widetilde{x}_1^k-x_1^k\\
\widetilde{x}_2^k-x_2^k\\
\vdots\\
\widetilde{x}_m^k-x_m^k\\
\widetilde{\lambda}^k-\lambda^k
\end{pmatrix}.
	\end{aligned}
	\end{small}
	$$
	 Then the prediction step  \eqref{G35} satisfies \eqref{G27} with $L$ and $Q$ defined in \eqref{G38}.  The correction step is easy to verified.
	 
	 	\noindent Proof of (2). We can refer to Theorem \ref{t2.2} (2).
\end{proof}

\subsection{Faster algorithm satisfying \eqref{G27}-\eqref{G28} for solving \eqref{P2}}\label{s4.2}
This subsection presents an algorithm satisfying \eqref{G27}-\eqref{G28} for solving  \eqref{P2}.


We consider the following algorithm   for solving \eqref{P2}:
\begin{framed}
	\noindent{\bf[Prediction step.]} With given $r,s>0$, $(\breve{x}^{k-1},\breve{y}^{k-1})$ and $({x}^k,{y}^k)$, find $(\breve{x}^k,\breve{y}^k)$ by
	\begin{equation}\label{G16}
	\begin{cases}
	\breve{x}^k=\arg\min\limits_{x}\left\{
	\Phi(x,y^k)+\frac{r\tau^k}{2}\|(\frac{1}{\tau^k}x-\frac{1-\tau^k}{\tau^k}\breve{x}^{k-1})-x^k\|^2
	\right\},\\
	\breve{y}^k=\arg\max\limits_{y}\left\{
	\Phi([\widetilde{x}^k+\alpha(\widetilde{x}^k-x^k)],y)-\frac{s\tau^k}{2}\| (\frac{1}{\tau^k}y-\frac{1-\tau^k}{\tau^k}\breve{y}^{k-1})-y^k\|^2
	\right\},
	\end{cases}
	\end{equation}where $\tau^k$ satisfy \eqref{G13}. 
	
	\noindent{\bf [Correction step.]} Update $(x^{k+1},y^{k+1})$ by
	\begin{equation}\label{G17}
	\begin{pmatrix}
	x^{k+1}\\y^{k+1}
	\end{pmatrix}=	\begin{pmatrix}
	x^{k}\\y^{k}
	\end{pmatrix}-\begin{pmatrix}
	I_n&0\\
	-(1-\alpha)\frac{1}{s}A&I_m
	\end{pmatrix}	\begin{pmatrix}
	x^{k}-\widetilde{x}^k\\y^{k}-\widetilde{y}^k
	\end{pmatrix},
	\end{equation}where $$
	\begin{pmatrix}
	\widetilde x^k\\\widetilde y^k
	\end{pmatrix}=\begin{pmatrix}
	\frac{1}{\tau^k}\breve x^k-\frac{1-\tau^k}{\tau^k}\breve x^{k-1}\\ \frac{1}{\tau^k}\breve y^k-\frac{1-\tau^k}{\tau^k}\breve y^{k-1}
	\end{pmatrix}.
	$$
\end{framed}

The following theorem clarifies that Algorithm \eqref{G16}-\eqref{G17} satisfies \eqref{G27}-\eqref{G28} with convergence conditions  \eqref{V5}-\eqref{V6}. Hence the rates of  $O(1/t)$ in the non-ergodic sense of the primal-dual gap  and $O(1/t^2)$ in the pointwise sense can be obtained by Theorem \ref{TNE} and \ref{T3}. 
	\begin{theorem}
		For $L,~Q,~M$ defined in \eqref{G38} and $\theta(u), ~u,~w$, $F(w),~T(w)$ as in Example \ref{E2}, it holds that 
		\begin{itemize}
			\item[\textnormal{({1})}]     	Algorithm \eqref{G16}-\eqref{G17} satisfies \eqref{G27}-\eqref{G28}.
			\item[\textnormal{({2})}]   $H$ and $G$ satisfying \eqref{V5}-\eqref{V6} are positive definite if $$
			rs>(1-\alpha+\alpha^2)\rho(A^TA),~\alpha\in[0,1].
			$$
		\end{itemize}
	\end{theorem}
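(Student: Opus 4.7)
The plan is to follow the template of Theorems \ref{t2.1}, \ref{t2.2}, and in particular \ref{t2.3}, since Algorithm \eqref{G16}-\eqref{G17} is merely the inertial/extrapolated analogue of Algorithm \eqref{G9}-\eqref{G10} for the same min-max problem \eqref{P2}, and the matrices $L,Q,M$ are precisely those in \eqref{G41} (as in Theorem \ref{t2.3}). Part (2) then requires no new work: the matrices $H=QM^{-1}$ and $G=Q^{T}+Q-M^{T}HM$ are identical to the ones analyzed in Theorem \ref{t2.3}(2), so the positive-definiteness conclusion under $rs>(1-\alpha+\alpha^{2})\rho(A^{T}A)$ follows directly from the computation already recorded there (cf.\ \cite[Proposition 4.1]{doi:10.1137/21M1453463}).

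For part (1), the crucial observation I would record at the outset is the extrapolation identity
\[
\widetilde{x}^{k}=\tfrac{1}{\tau^{k}}\breve{x}^{k}-\tfrac{1-\tau^{k}}{\tau^{k}}\breve{x}^{k-1},\qquad \widetilde{y}^{k}=\tfrac{1}{\tau^{k}}\breve{y}^{k}-\tfrac{1-\tau^{k}}{\tau^{k}}\breve{y}^{k-1},
\]
so that the proximal term $\tfrac{r\tau^{k}}{2}\|\tfrac{1}{\tau^{k}}x-\tfrac{1-\tau^{k}}{\tau^{k}}\breve{x}^{k-1}-x^{k}\|^{2}$ has $x$-gradient $r\bigl(\tfrac{1}{\tau^{k}}x-\tfrac{1-\tau^{k}}{\tau^{k}}\breve{x}^{k-1}-x^{k}\bigr)$, which collapses at $x=\breve{x}^{k}$ to $r(\widetilde{x}^{k}-x^{k})$. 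Consequently the optimality condition of the $\breve{x}^{k}$-subproblem becomes $0\in \partial f(\breve{x}^{k})-A^{T}y^{k}+r(\widetilde{x}^{k}-x^{k})$, and after inserting $\pm A^{T}\widetilde{y}^{k}$,
\[
0\in \partial f(\breve{x}^{k})-A^{T}\widetilde{y}^{k}+r(\widetilde{x}^{k}-x^{k})+A^{T}(\widetilde{y}^{k}-y^{k}).
\]
An identical simplification on the $\breve{y}^{k}$-subproblem yields $0\in \partial g(\breve{y}^{k})+A\widetilde{x}^{k}+\alpha A(\widetilde{x}^{k}-x^{k})+s(\widetilde{y}^{k}-y^{k})$.

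Stacking these two inclusions gives exactly
\[
0\in\begin{pmatrix}\partial f(\breve{x}^{k})\\ \partial g(\breve{y}^{k})\end{pmatrix}+\begin{pmatrix}-A^{T}\widetilde{y}^{k}\\ A\widetilde{x}^{k}\end{pmatrix}+\begin{pmatrix}rI_{n}&A^{T}\\ \alpha A& sI_{m}\end{pmatrix}\begin{pmatrix}\widetilde{x}^{k}-x^{k}\\ \widetilde{y}^{k}-y^{k}\end{pmatrix},
\]
which is precisely the prediction step \eqref{G27} with $L$ and $Q$ from \eqref{G41}: the first two terms form $(T-F)(\breve{w}^{k})+F(\widetilde{w}^{k})$ and the last term is $L^{T}QL(\widetilde{w}^{k}-w^{k})$. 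The correction step \eqref{G17} is, by direct inspection, identical to \eqref{G28} with $M$ from \eqref{G41}, so no further computation is needed.

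The only real obstacle is the algebraic reduction in the prediction step: one must check that, despite the more elaborate inertial proximal penalty in \eqref{G16}, its first-order condition collapses back to the same linear-in-$(\widetilde{w}^{k}-w^{k})$ form that appeared for the non-accelerated algorithm \eqref{G9}-\eqref{G10}. This hinges on the exact cancellation between the outer weight $\tau^{k}$ and the inner factor $1/\tau^{k}$ in the chain rule; everything else is verification and copying results already established earlier in the paper.
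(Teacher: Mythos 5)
Your proposal is correct and follows essentially the same route as the paper: write the optimality conditions of the two subproblems (using the $\tau^k$--$1/\tau^k$ cancellation so the proximal terms reduce to $r(\widetilde{x}^k-x^k)$ and $s(\widetilde{y}^k-y^k)$), insert $\pm A^T\widetilde{y}^k$, stack them into the form $0\in(T-F)(\breve{w}^k)+F(\widetilde{w}^k)+L^TQL(\widetilde{w}^k-w^k)$ with $L,Q$ from \eqref{G41}, verify the correction step by inspection, and defer part (2) to Theorem \ref{t2.3}(2) (i.e., \cite[Proposition 4.1]{doi:10.1137/21M1453463}). You also implicitly correct the statement's typographical reference to \eqref{G38}, which should indeed be \eqref{G41}.
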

\begin{proof}
	Proof of (1).
	The optimality condition of the $x$-subproblem reads as:
	\begin{eqnarray*}
	0&\in&	\partial f(\breve{x}^k)
		-A^T{y}^k+r(\widetilde{x}^k-x^k)\\&=&
\partial f(\breve{x}^k)
	-A^T\widetilde{y}^k+r(\widetilde{x}^k-x^k)+A^T(\widetilde{y}^k-y^k).
	\end{eqnarray*}
	The optimality condition of the $y$-subproblem reads as:
	\begin{equation*}
	0\in\partial g(\breve{y}^k)+
	A[\widetilde{x}^k+\alpha(\widetilde{x}^k-x^k)]+s(\widetilde{y}^k-y^k).
	\end{equation*}
	Combining  the above two relations together yields that
	\begin{equation}\nonumber
	\begin{aligned}
	&0\in\begin{pmatrix}
	\partial f(\breve{x}^k)\\\partial g(\breve{y}^k)
	\end{pmatrix}+\begin{pmatrix}
		-A^T\widetilde{y}^k\\A\widetilde{x}^k
	\end{pmatrix}+
	\begin{pmatrix}
	r(\widetilde{x}^k-x^k)+A^T(\widetilde{y}^k-y^k)\\
	\alpha A(\widetilde{x}^k-x^k)+s(\widetilde{y}^k-y^k)
	\end{pmatrix}
.
	\end{aligned}
	\end{equation}
	This is equivalent to:
		\begin{equation}\nonumber
		\begin{aligned}
		&	0\in\begin{pmatrix}
		\partial f(\breve{x}^k)\\\partial g(\breve{y}^k)
		\end{pmatrix}+\begin{pmatrix}
		-A^T\widetilde{y}^k\\A\widetilde{x}^k
		\end{pmatrix}+\begin{pmatrix}
		rI_n&A^T\\
		\alpha A&sI_m
		\end{pmatrix}\begin{pmatrix}
	\widetilde{x}^k-x^k\\
	\widetilde{y}^k-y^k
		\end{pmatrix}.
		\end{aligned}
		\end{equation}
	Then we obtain the prediction step  \eqref{G16} satisfies \eqref{G27} with  $L$  and  $Q$ defined in \eqref{G41}.   The correction step is easy to verified.
	
	\noindent Proof of (2). We can refer to Theorem \ref{t2.3} (2).
\end{proof}

\section{Conclusions}
We present a faster prediction-correction framework to build $O(1/t)$  convergence rate in the non-ergodic sense and $O(1/t^2)$  convergence rate in the pointwise sense without any additional assumption. In comparison, He and Yuan's framework achieves an $O(1/t)$ convergence rate in both the ergodic and the pointwise senses. Our framework can provide faster algorithms for solving general convex optimization problems. In particular, we present three  faster algorithms:  ADMM-type algorithm with dual variable updating twice for solving two-block separable convex optimization with equality linear constraints,  multi-block ADMM-type algorithm for solving multi-block separable convex optimization problems with linear equality   constraints and CP-type algorithm for solving min-max problems with larger step sizes ($rs>0.75\rho(A^TA)$). Future works include in-depth understanding our framework, for example, from the view point of second-order differential equations, establishing the weak convergence of the iterative sequence and  the KKT measure.


\bibliographystyle{siamplain}
\bibliography{ref}

\end{document}